\newcommand{\rt}{\rightarrow}
\newcommand{\lrt}{\longrightarrow}
\newcommand{\st}{\stackrel}
\newcommand{\pa}{\partial}
\newcommand{\C}{\mathbf{C} }
\newcommand{\K}{\mathbb{K} }
\newcommand{\X}{\mathbf{X} }
\newcommand{\Y}{\mathbf{Y} }
\newcommand{\T}{\mathbf{T} }
\newcommand{\I}{\mathbf{I} }
\newcommand{\F}{\mathbf{F} }
\newcommand{\CO}{\mathcal{O}}
\newcommand{\Z}{\mathbb{Z} }
\newcommand{\CE}{\mathcal{E}}
\newcommand{\CA}{\mathcal{A} }
\newcommand{\CB}{\mathcal{B} }
\newcommand{\CC}{\mathcal{C} }
\newcommand{\CF}{\mathcal{F} }
\newcommand{\CG}{\mathcal{G} }
\newcommand{\CS}{\mathcal{S} }
\newcommand{\CT}{\mathcal{T} }
\newcommand{\CX}{\mathcal{X} }
\newcommand{\CJ}{\mathcal{J} }
\newcommand{\Prj}{{\rm{Proj}}}
\newcommand{\Flat}{{\rm{Flat}}}
\newcommand{\KA}{\K(\CA)}
\newcommand{\DPFR}{{\mathbb{D}_{\rm{pur}}({\Flat} R)}}
\newcommand{\CPFA}{{\mathbb{C}_{\rm{pac}}({\Flat} \CA)}}
\newcommand{\DPFA}{{\mathbb{D}_{\rm{pac}}({\Flat} \CA)}}
\newcommand{\KPR}{{\mathbb{K}({\Prj}  R)}}
\newcommand{\KFA}{{\mathbb{K}({\Flat} \CA)}}
\newcommand{\KPFA}{{\mathbb{K}_{\rm{p}}({\Flat} \CA)}}
\newcommand{\im}{{\rm{Im}}}
\newcommand{\Ker}{{\rm{Ker}}}
\newcommand{\Hom}{{\rm{Hom}}}
\newcommand{\Ext}{{\rm{Ext}}}
\newcommand{\Lim}{\underset{\underset{i \in \mathrm{I}}{\longrightarrow}}{\lim}}
\newtheorem{theorem}{Theorem}[section]
\newtheorem{corollary}[theorem]{Corollary}
\newtheorem{lemma}[theorem]{Lemma}
\newtheorem{proposition}[theorem]{Proposition}
\theoremstyle{definition}
\newtheorem{definition}[theorem]{Definition}
\newtheorem{remark}[theorem]{Remark}
\newtheorem{s}[theorem]{}
\theoremstyle{plain}
\theoremstyle{definition}
\numberwithin{equation}{section}
\begin{document}

\title[The homotopy category of flat functors] {The homotopy  category of flat functors}
\author[E. Hosseini and A. Zaghian]{Esmaeil Hosseini and Ali Zaghian}

\address{Department of Mathematics, Shahid Chamran University of
Ahvaz, P.O.Box: 61357-83151, Ahvaz, Iran.}
\email{e.hosseini@scu.ac.ir}

\address{Department of Mathematics and Cryptography, Malek Ashtar University of
Technology, P.O.Box: 115-83145, Isfahan, Iran.}
\email{a.zaghian@mut-es.ac.ir}

\keywords{Functor category,
homotopy category, flat functor.\\
2010 MSC: 18E30, 18G35, 14F05.}
\begin{abstract}
Let $\CC$ be a small category  and $\CG$ be a tensor Grothendieck
category. We define a notion of flatness in the category Fun$(\CC,
\CG)$ of all covariant functors from $\CC$ to $\CG$ and show that
the inclusion $\KFA\lrt\KA$ has a right adjoint where $\KA$ is the
homotopy category of $\CA$ and $\KFA$ its subcategory consisting of
 complexes of flat functors.  In addition, we find a replacement for the quotient
$\DPFA=\KFA/\KPFA$  of triangulated categories where $\KPFA$ is the
homotopy category of all pure acyclic complexes of flat functors.
\end{abstract}
\maketitle

\section{Introduction}
Functor categories are used as a potent tool for solving some
important problems in representation theory. They are  introduced by
Maurice Auslander in \cite{A66} and used in his proof of the first
Brauer-Thrall conjecture (see \cite{A78}, \cite{AR74}, \cite{AR72}).

In this work, we assume that $\CC$ is a small  category, $(\CG,
\textmd{-}\otimes\textmd{-})$ is a closed symmetric tensor
Grothendieck category and $\CA=\mathrm{Fun}(\CC, \CG)$ is the
category of all covariant functors from $\CC$ to $\CG$. The homotopy
category of $\CA$ has some interesting triangulated subcategories.
One of them is the homotopy category $\KFA$ of flat functors which
is studied by Amnon Neeman in \cite{Ne08} and \cite{Ne10}. He
invented that, when $\CC$ is a category with a single object and
$\CG$ is the category of modules over a ring $R$ with $1\neq 0$, the
homotopy category $\KPR$ of projective modules can be replaced by a
quotient of $\KFA$ modulo its thick subcategory consisting of all
pure acyclic complexes. This quotient  is called the pure derived
category of flat $R$-modules  and  denoted by $\DPFR$ (see
\cite{Mu07}, \cite{AS12}, \cite{HS13}, \cite{MS11} for more
details). This is an important future of Neeman's work, because,
there are some closed symmetric tensor Grothendieck categories with
no non-zero projective object (see \cite[III. Ex. 6.2]{Ha}).

We know that there are  different  methods to  define  flatness in
$\CG$. One of them is defined by the categorical purity  and the
other one is defined by the tensor  purity. But, categorical flats
are trivial in some situations. For example, if
$X=\mathbb{P}^n(R)=\mathrm{Proj}(R[x_1,...,x_n])$ is the projective
$n$-space over a commutative ring $R$, then the category
$\mathfrak{Qco}X$ of all quasi-coherent $\CO_X$-modules is a closed
symmetric tensor Grothendieck category. By \cite[Corollary
4.6]{ES15}, $\mathfrak{Qco}X$ does not have
 non-zero categorical flat objects. This shows that
categorical flats are rare in Fun$(\CC, \mathfrak{Qco}X)$. Due to
this lack of data the study needed to employ another way to define
flatness. This is motivated us to define another notion of flatness
in $\CA$.

This paper is organized as follows. In Section 2, we define our
concept of flatness in $\CA$ and show that any object in $\CA$ has a
flat precover and a cotorsion preenvelope. In Section 3, we show
that the category of all complexes in $\CA$ admits flat covers and
cotorsion envelopes. In Section 4,  we prove that the pure derived
category of flat functors can be replaced by the homotopy category
of dg-cotorsion complexes of flat functors.

Before starting, let us fix some notations and definitions.
\begin{s}{\sc Category of complexes.}
Let $\C(\CA)$ be the category of all complexes in $\CA$ (complexes
are write cohomologically). A complex in $\CA$ is called
\textit{acyclic} if all cohomological groups are trivial. Let
$\X=(X^i,\pa_\X^i)_{i\in\Z}$ be a complex in $\CA$, for any integer
$n$, $\X[n]$ denotes the complex $\X$ shifted $n$ degrees to the
left. The \textit{left} \textit{truncation} of $\X$ at $n$ is
denoted by $\X_{\leq n}$ and defined by the complex
$$\cdots \lrt \X^{n-2} \lrt \X^{n-1} \lrt \rm{Ker}\pa^{n}_\X \lrt
0.$$

Let  $f:\X\lrt \Y$ be a morphism of complexes.  We say that $f$ is a
\textit{quasi}-\textit{isomorphism} if for any $n\in\Z$,
$\mathrm{H}^n(f): \mathrm{H}^{n}(\X)\rt \mathrm{H}^{n}(\Y)$ of
cohomological  groups is an isomorphism. The \textit{mapping}
\textit{cone} of $f$ is denoted by $\mathrm{C}_f$ and defined by the
complex $(\X[1]\oplus\Y, \pa_{\mathrm{C}_f} )$ where
\[\pa_{\mathrm{C}_f}={\begin{pmatrix}\pa_{\X[1]}&0\\f[1]&\pa_{\Y}\end{pmatrix}.}\]
We know that a morphism $f:\X\lrt \Y$ of complexes is a
quasi-isomorphism if and only if $\mathrm{C}_f$ is acyclic.
\end{s}

\begin{s}{\sc Cotorsion theories.}
Let $\mathbb{A}$ be a Grothendieck category. The \textit{right}
\textit{orthogonal} of a class $\mathbb{S}$ in $\mathbb{A}$ is
defined by
$$\mathbb{S}^\perp:=\{\mathbf{B} \in \mathbb{A} \ | \ \Ext_{\mathbb{A}}^1(\mathbf{S},\mathbf{B})=0, \ {\rm{for \ all}}
\ \mathbf{S} \in \mathbb{S} \}.$$ The left orthogonal
${}^\perp\mathbb{S}$
 is defined dually. The pair $(\mathbb{X},
\mathbb{Y})$ of classes in  $\mathbb{A}$ is called a
\textit{cotorsion} \textit{theory} if $\mathbb{X}^\perp=\mathbb{Y}$
and $\mathbb{X}={}^\perp\mathbb{Y}$. An object $\mathbf{A} \in
\mathbb{A}$ has a \textit{special} $\mathbb{Y}$-\textit{preenvelope}
if there is an exact sequence $  0\rt \mathbf{A} \rt \Y' \rt \X'\rt0
$, where $\X'\in {}^\perp\mathbb{Y}$ and $\Y' \in \mathbb{Y}$. The
\textit{special} $\mathbb{X}$-\textit{precover} is defined dually. A
cotorsion theory $(\mathbb{X}, \mathbb{Y})$ in $\mathbb{A}$ is
called \textit{complete} if any object in $\mathbb{A}$ has a special
$\mathbb{X}$-precover and a special $\mathbb{Y}$-preenvelope.
\end{s}

\begin{s}{\sc Orthogonality in triangulated categories.}
Let $\CS$ be a thick  subcategory of a triangulated category $\CT$
and $ \CS^\perp=\{C \in \CT \ | \ \Hom_\CT(S,C)=0, \ {\rm{for \
all}} \ S \in \CS \}$. If we have a distinguished triangle
$\xymatrix@C-1pc{\mathbf{X}\ar[r]&\mathbf{C}\ar[r]&\mathbf{S}\ar[r]&\Sigma\mathbf{X}}$
such that $\C\in\CS^\perp$ and $\mathbf{S}\in\CS$ then, $\CS
\rightarrow \CT$ has a right adjoint (see  \cite[Lemma 3]{B}). So,
by \cite[Proposition 4.9.1]{K10}, we have a triangle equivalence
$\CS^\perp
 \lrt \CT/\CS$  of triangulated  categories.
\end{s}

\textbf{Setup:}  Let Flat$\CG$ be the class of all \textbf{tensor}
\textbf{flat} objects in $\CG$ and
$$\mathrm{Cot}\CG=\{\CC\in\CG|\forall ~~~\CF\in\mathrm{Flat}\CG,~~~~~
\Ext_{\CG}^1(\CF,\CC)=0 \}$$ be the class of all cotorsion objects.
In this paper, we assume that (Flat$\CG$, Cot$\CG$) is a
\textbf{complete} \textbf{cotorsion} \textbf{theory}.

\section{Flatness in functor categories}

In this section, we present our  notion of a flat functor and prove
that the corresponding cotorsion theory is complete.  Let
$\mathcal{H}om_{\CG}(\textmd{-},\textmd{-}):\CG\times\CG^{\mathrm{op}}\lrt\CG$
be the right adjoint of $\textmd{-}\otimes\textmd{-}$,  $\CJ$ be an
injective cogenerator in $\CG$ and
$(\textmd{-})^+=\mathcal{H}om_{\CG}(\textmd{-},\CJ)$. suppose that
$\CB=\mathrm{Fun}(\CC^{\mathrm{op}}, \CG)$ is the category of all
contravariant functors from $\CC^{\mathrm{op}}$ into $\CG$.
Therefore, $(\textmd{-})^+=\mathcal{H}om_{\CG}(\textmd{-},\CJ)$ is a
contravariant functor from $\CA$ to $\CB$. In the following
definition, we use $(\textmd{-})^+$ to define a notion of  purity in
$\CA$. This method was first used by Bo Stenstr\"{o}m in
\cite{Sten68}.

\begin{definition}\label{111}
A short exact sequence $\CE$ in $\CA$ is said to be pure if $\CE^+$
 splits.
\end{definition}

This definition  encouraged us to look at the class of all pure
injective functors. Recall that, a functor $\mathcal{P}$ is said to
be \emph{pure} \emph{injective} if it is injective with respect to
pure exact sequences in $\CA$. In the following result, we show that
the class of all pure injective functors is preenveloping. The proof
is similar to the proof of \cite[Proposition 2.7]{Ho13} which we
will prove it for further clarity.

\begin{proposition}\label{safi1}
The class of all pure injective functors is preenveloping.

\end{proposition}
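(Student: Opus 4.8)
The plan is to show that every functor $\X \in \CA$ admits a pure injective preenvelope by embedding $\X$ into a pure injective functor via the double-dual construction coming from $(\textmd{-})^+$. The natural candidate is the biduality map $\X \lrt \X^{++}$, where $(\textmd{-})^+ = \mathcal{H}om_\CG(\textmd{-},\CJ)$ carries $\CA$ into $\CB$ and then back into $\CA$. First I would verify that for any $\Y \in \CB$, the functor $\Y^+$ is pure injective in $\CA$: given a pure exact sequence $\CE: 0 \rt \X' \rt \X'' \rt \X''' \rt 0$ in $\CA$, applying $(\textmd{-})^+$ yields a split sequence $\CE^+$ by Definition~\ref{111}, and $\Hom_\CA(\textmd{-}, \Y^+)$ should coincide, via the tensor-hom adjunction lifted to functor categories, with $\Hom_\CB(\Y, (\textmd{-})^+)$ applied to $\CE$; since $\CE^+$ splits, the resulting sequence of Hom-groups is exact, so $\Y^+$ is injective with respect to $\CE$.

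Next I would check that the biduality morphism $\eta_\X : \X \lrt \X^{++}$ is a pure monomorphism. This uses the fact that $\CJ$ is an injective cogenerator in $\CG$: the map $\X \rt \X^{++}$ is (objectwise, at each $C \in \CC$) the canonical evaluation into the double dual relative to the injective cogenerator, hence a monomorphism, and one checks it is pure by applying $(\textmd{-})^+$ once more and exhibiting a splitting, exactly as in the module-theoretic argument of Stenstr\"{o}m. Concretely, $\eta_\X^+ : \X^{+++} \rt \X^+$ is split by the biduality map $\X^+ \rt \X^{+++}$ of the functor $\X^+ \in \CB$, so $\eta_\X^+$ splits and $\eta_\X$ is pure by definition. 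Combining the two steps, $\eta_\X : \X \rt \X^{++}$ is a pure monomorphism into a pure injective functor, which is precisely a pure injective preenvelope in the weak (not-necessarily-unique) sense, and this establishes that the class is preenveloping.

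The main obstacle I anticipate is verifying the adjunction identity $\Hom_\CA(\X, \Y^+) \cong \Hom_\CB(\Y, \X^+)$ at the level of functor categories rather than just in $\CG$, since $(\textmd{-})^+$ is built pointwise from $\mathcal{H}om_\CG(\textmd{-},\CJ)$ but must interact correctly with the $\CC$-action and $\CC^{\mathrm{op}}$-action defining $\CA$ and $\CB$. One must confirm that the tensor-hom adjunction in $\CG$ promotes to a genuine adjunction (or duality) between $\CA$ and $\CB$ compatible with the functoriality, so that split sequences are detected correctly; this is the technical heart borrowed from \cite[Proposition 2.7]{Ho13}. Once this naturality is in place, the remaining verifications that $\eta_\X$ is monic and pure are formal consequences of $\CJ$ being an injective cogenerator, and the preenveloping property follows immediately.
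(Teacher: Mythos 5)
Your proof is correct and shares the paper's overall skeleton: both arguments embed $F$ into $F^{++}$ via the biduality morphism, observe that this morphism is pure because its dual $F^{+++}\lrt F^{+}$ is split by the biduality map of $F^{+}$ (a triangle identity), and conclude from pure injectivity of the target. Where you genuinely diverge is the key sub-lemma. The paper establishes pure injectivity of $F^{++}$ by an explicit diagram chase: given a pure monomorphism $i: G\lrt K$ and $f: G\lrt F^{++}$, it extends $f$ by the composite $s_2 f^{++} s_1 h$, where $s_1$ is a section of $i^{++}$ (which exists because $i^{+}$ is a split epimorphism) and $s_2$ is a section of the biduality $F^{++}\lrt F^{++++}$. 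You instead prove the more general statement that $\Y^{+}$ is pure injective for \emph{every} $\Y\in\CB$, by transporting the problem across the duality adjunction $\Hom_{\CA}(\X,\Y^{+})\cong\Hom_{\CB}(\Y,\X^{+})$ and using that $\Hom_{\CB}(\Y,-)$ preserves exactness of split sequences. Your route is cleaner and buys more: it immediately yields the paper's Corollary 2.3 (that $F^{+}$ is always pure injective), which the paper instead deduces from $F^{+}$ being a direct summand of $F^{+++}$. The cost, as you correctly flag, is verifying that the pointwise internal-hom duality promotes to a genuine adjunction between $\CA$ and $\CB$ compatible with the $\CC$- and $\CC^{\mathrm{op}}$-actions; it is worth noting that the paper's diagram chase is essentially this adjunction unpacked, since it uses only the naturality of the biduality transformation and the triangle identities, so the two proofs rest on the same underlying duality data. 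You also make explicit a step the paper leaves implicit, namely that a pure monomorphism into a pure injective object is automatically a preenvelope, since pure injectivity of any test object $E'$ makes $\Hom_{\CA}(\X^{++},E')\lrt\Hom_{\CA}(\X,E')$ surjective.
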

\begin{proof}
Let $F$ be a functor. The canonical monomorphism $\varphi_F:F\lrt
F^{++}$ is pure, because  $F^{+}\lrt F^{+++}$ is the section of
$F^{+++}\lrt F^{+}$.  In addition, for a given diagram

\begin{align}\label{safi101}
\xymatrix@C-0.7pc@R-0.9pc{ 0 \ar[r]&G \ar[r]^i\ar[d]^f&K\\&F^{++}}
\end{align}we have the following commutative  diagram

\begin{align}
 \xymatrix@C-0.7pc@R-0.9pc{ & G \ar@{->}[rr]^i\ar@{-}[d]\ar[dd] & &
K \ar@{->}[dd]^h
\\
F^{++} \ar@{<-}[ur]^f\ar@{->}[dd]^s &
\\
& G^{++} \ar@{-}[r]^{i^{++}}\ar[rr] & & K^{++}
\\
F^{++++} \ar@{<-}[ur]^{f^{++}} & }
\end{align}
such that $\xymatrix@C-0.7pc@R-0.9pc{ s_1:K^{++} \ar[r] &  G^{++}}$
is the section of $i^{++}$ and  $ \xymatrix@C-0.7pc@R-0.9pc{ s_2:
F^{++++} \ar[r] & F^{++}}$ is the section of $s$. Then,
$s_2f^{++}s_1h: K\lrt F^{++}$ completes  \eqref{safi101} to a
commutative diagram. So,  $F^{++}$ is pure injective.

\end{proof}
\begin{corollary}\label{safi11}
For a given functor $F$,  $F^+$ is pure injective.
\end{corollary}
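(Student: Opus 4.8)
The plan is to realize $F^{+}$ as a direct summand of a functor that we already know to be pure injective, and then use that pure injectivity passes to summands. First I would record the one ingredient borrowed from the proof of Proposition \ref{safi1}: for the contravariant duality $(\textmd{-})^{+}=\mathcal{H}om_{\CG}(\textmd{-},\CJ)$, the canonical morphism of a functor into its double dual is always a split monomorphism. Applying this to $F^{+}$ rather than to $F$, the canonical map $\varphi_{F^{+}}\colon F^{+}\lrt (F^{+})^{++}=F^{+++}$ is split, with retraction $(\varphi_{F})^{+}$; indeed the adjunction triangle identity for $(\textmd{-})^{+}$ gives $(\varphi_{F})^{+}\circ\varphi_{F^{+}}=\mathrm{id}_{F^{+}}$. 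This is precisely the section/retraction phenomenon already invoked in the proposition, so I may quote it. Consequently $F^{+}$ is a direct summand of $F^{+++}$.

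Next I would identify $F^{+++}=(F^{+})^{++}$ as the double dual of the object $F^{+}$, which lives in $\CB=\mathrm{Fun}(\CC^{\mathrm{op}},\CG)$ rather than in $\CA$. Here I would note that the construction $(\textmd{-})^{+}$ and the resulting notion of purity (Definition \ref{111}) are defined in exactly the same way on $\CA$ and on $\CB$, so the argument of Proposition \ref{safi1} applies verbatim with $\CB$ in place of $\CA$. Reading the proposition in this symmetric form and applying it to $F^{+}\in\CB$ shows that $(F^{+})^{++}=F^{+++}$ is pure injective.

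Finally I would combine the two steps through the standard fact that the class of pure injective functors is closed under direct summands: since pure injectivity is an injectivity (lifting) property relative to the class of pure exact sequences, any solution to a lifting problem for the summand $F^{+}$ can be obtained by solving it for $F^{+++}$ and then composing with the retraction $(\varphi_{F})^{+}$. Hence $F^{+}$ is pure injective. I do not expect any genuine computation here; the only points requiring care are the two bookkeeping checks, namely that Proposition \ref{safi1} is being applied in $\CB$ (not $\CA$) and that the map splitting $\varphi_{F^{+}}$ is exactly the triangle identity for the contravariant duality $(\textmd{-})^{+}$. Once these are in place the corollary is immediate.
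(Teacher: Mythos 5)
Your proof is correct and follows essentially the same route as the paper, which simply observes that $F^{+}$ is a direct summand of $F^{+++}$ and invokes Proposition \ref{safi1}. Your version usefully makes explicit the two points the paper leaves tacit --- that the retraction of $\varphi_{F^{+}}$ is $(\varphi_{F})^{+}$ via the triangle identity, and that Proposition \ref{safi1} must be read in $\CB$ rather than $\CA$ --- but the underlying argument is identical.
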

\begin{proof}
Since $F^+$  is a direct summand of $F^{+++}$ then by Proposition
\ref{safi1}, $F^+$ is pure injective.
\end{proof}

In \cite[Theorem 6]{Her}, the author used the categorical notion of
purity and proved that $\CA$ has enough categorical pure injective
objects. This class of pure injectives is different from objects
which are characterized in Proposition \ref{safi1}. Moreover, if
$\CE$ is an exact sequence in $\CA$ then  we deduce a degree-wise
pure exact sequence $0\to\CE\to\CE^{++}$ of complexes.  This is the
most important future of Definition \ref{111}.

Next, we use Definition \ref{111}  and extend the $\otimes$-purity
to $\CA$.

\begin{definition}\label{safi11}
A functor $F$ in $\CA$ is called flat if any short exact sequence
ending in $F$ is pure.
\end{definition}

It is necessary to say that, if $\CA$ has enough projective objects,
then Definition \ref{safi11} and the categorical notion of flatness
are equivalent (see \cite[Theorem 3]{Sten68}). But, in general case,
$\CA$ does not have non-zero categorical flat objects and Definition
\ref{safi11} is the only notion of flatness  in $\CA$.

In the next result, we prove the well-known relation between flat
and injective functors i.e. when Flat$\CA$ is the class of all flat
objects in $\CA$ and Inj$\CB$ is the class of all injective objects
in $\CB$, we show that
$(\textmd{-})^+:\mathrm{Flat}\CA\lrt\mathrm{Inj}\CB$ is a
contravariant functor (see the proof of \cite[Theorem 2.13]{Ho13}).

\begin{proposition}\label{p0}
A functor $F$ in $\CA$ is flat if and only if $F^+$ is  injective in
$\CB$.
\end{proposition}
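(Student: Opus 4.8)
The plan is to characterize flatness of $F$ via the splitting of $(\textmd{-})^+$ applied to short exact sequences ending in $F$, and then translate splitting statements into injectivity of $F^+$ through the standard adjunction $\Ext^1$-vanishing criterion. First I would recall that, by Definition~\ref{safi11}, $F$ is flat precisely when every short exact sequence $\CE\colon 0\rt K\rt L\rt F\rt 0$ in $\CA$ is pure, which by Definition~\ref{111} means that $\CE^+\colon 0\rt F^+\rt L^+\rt K^+\rt 0$ splits in $\CB$. Thus the statement reduces to showing that $F^+$ is injective in $\CB$ if and only if every such $\CE^+$ splits, i.e.\ if and only if $\Ext^1_\CB(\textmd{-},F^+)$ vanishes on all the objects $K^+$ that arise as cosyzygies of $F$.

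The key computational input is the adjunction between $\textmd{-}\otimes\textmd{-}$ and $\mathcal{H}om_\CG(\textmd{-},\textmd{-})$, which is inherited levelwise by the functor categories $\CA$ and $\CB$. Since $\CJ$ is an injective cogenerator of $\CG$, the functor $(\textmd{-})^+=\mathcal{H}om_\CG(\textmd{-},\CJ)$ sends exact sequences to exact sequences and, more importantly, one expects a natural isomorphism of the form $\Ext^1_\CB(G,F^+)\cong (\Tor_1(F,G))^+$ or, in the present purely categorical setting, a duality identifying $\Ext^1_\CB(\textmd{-},F^+)$ with the obstruction to purity of sequences ending in $F$. I would make this precise: the forward direction ($F^+$ injective $\Rightarrow$ $F$ flat) is immediate, since if $F^+$ is injective then every $\CE^+$ splits automatically, so every $\CE$ is pure and $F$ is flat. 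The substance lies in the converse.

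For the converse, suppose $F$ is flat. To show $F^+$ is injective in $\CB$ it suffices, by Baer's criterion adapted to the Grothendieck category $\CB$ (equivalently, by checking $\Ext^1_\CB(G,F^+)=0$ for all $G\in\CB$), to produce for each $G$ a splitting of every extension of $F^+$ by $G$. The plan is to dualize such an extension back into $\CA$ using the canonical evaluation map and the cogenerating property of $\CJ$: an arbitrary short exact sequence in $\CB$ ending in a suitable functor dualizes to a short exact sequence in $\CA$ ending in $F$ (up to applying $(\textmd{-})^+$ twice and using the pure monomorphism $\varphi_F\colon F\rt F^{++}$ from the proof of Proposition~\ref{safi1}), whose purity then forces the original sequence to split after applying $(\textmd{-})^+$, and cogeneration lets one descend this splitting. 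Here I would lean on Corollary~\ref{safi11}, which guarantees that objects of the form $F^+$ are already pure injective, to close the gap between purity and genuine injectivity.

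The hard part will be the converse direction, specifically turning the purity of every sequence ending in $F$ (a statement about sequences in $\CA$) into the injectivity of $F^+$ (a statement about \emph{all} sequences ending in $F^+$ in $\CB$). The difficulty is that not every short exact sequence in $\CB$ with $F^+$ at the left is manifestly of the form $\CE^+$ for an $\CE$ in $\CA$; bridging this requires either an explicit $\Ext$–$\Tor$ duality isomorphism through $\CJ$ or a careful use of the double-dual unit $\varphi_F$ together with the pure injectivity of $F^+$ to reduce an arbitrary extension to one that does dualize correctly. I expect the cleanest route is to verify the $\Ext^1_\CB(G,F^+)\cong(\text{something involving }F\text{ and }G)^+$ identity directly from the tensor-hom adjunction and exactness of $(\textmd{-})^+$, so that flatness of $F$ (purity, hence the relevant $\Tor$-type vanishing after dualizing) yields $\Ext^1_\CB(G,F^+)=0$ for all $G$.
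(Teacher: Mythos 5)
Your easy direction is correct and matches the paper, and you have diagnosed the genuine difficulty accurately: an arbitrary extension $0\rt F^+\rt G\rt H\rt 0$ in $\CB$ need not be of the form $\CE^+$. But the converse is never actually proved, and the route you declare cleanest --- establishing $\Ext^1_{\CB}(G,F^+)\cong(\Tor_1(F,G))^+$ --- is not available in this setting. The paper sets up no tensor pairing between $\CA=\mathrm{Fun}(\CC,\CG)$ and $\CB$, so there is no $\Tor$ to invoke; more fundamentally, flatness of $F$ is \emph{defined} by purity of sequences ending in $F$, and the paper notes explicitly (right after that definition, citing Stenstr\"{o}m) that this notion agrees with tensor-/categorical flatness only when $\CA$ has enough projectives, which it need not --- indeed the whole point of the paper is that categorical flats can be entirely absent. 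So even if you constructed a functor tensor product and a derived $\Tor$, passing from purity-flatness to the $\Tor_1$-vanishing your isomorphism would require is precisely the implication that fails in general; this route is circular, not merely laborious.

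Your fallback sketch (double-dual unit plus pure injectivity) is the paper's actual strategy, but the two devices that make it work are missing from your plan. The paper dualizes the given extension to $0\rt H^+\rt G^+\st{f^+}{\rt} F^{++}\rt 0$ and \emph{pulls back along the unit} $i\colon F\rt F^{++}$, obtaining a sequence $0\rt H^+\rt P\st{h}{\rt} F\rt 0$ that genuinely ends in $F$; flatness makes it pure, and it splits because $H^+$ --- note: the plus-dual of the arbitrary quotient $H$, not $F^+$ itself --- is pure injective by the corollary that plus-duals are always pure injective. A splitting $h'$ of $h$ gives $g_1=gh'\colon F\rt G^+$ with $f^+g_1=i$, and the retraction of $f\colon F^+\rt G$ is then $g_1^+k$ (where $j,k$ are the canonical maps on $F^+$ and $G$), verified by the computation $g_1^+kf=g_1^+f^{++}j=i^+j=1_{F^+}$, the last equality being the triangle identity for the dualization adjunction. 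Your phrase ``cogeneration lets one descend this splitting'' is exactly the unproved step: cogeneration by $\CJ$ alone does not descend a splitting of a double-dualized sequence to the original one; the pullback construction and the identity $i^+j=1_{F^+}$ are what accomplish that descent, and neither appears in your proposal.
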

\begin{proof}
If $F^+$ is injective, then  any  short exact sequence ending in $F$
is pure in $\CA$, because $\CE^+$ splits. So, $F$ is flat.

Conversely, assume that $F$ is  flat and

\begin{align}\label{mess0}
 \xymatrix@C-0.7pc@R-0.9pc{0\ar[r]& F^+\ar[r]& G\ar[r]& H\ar[r]& 0}
\end{align}be an arbitrary exact sequence in $\CB$. It is enough to show that
\eqref{mess0} splits in $\CB$. The top row of the following pullback
diagram
\[\xymatrix@C-0.7pc@R-0.9pc{0\ar[r]&H^+\ar[r]\ar@{=}[d]&P\ar[r]^h\ar[d]^g&F\ar[r]\ar[d]^i&0\\
0\ar[r]&H^+\ar[r]&G^+\ar[r]^{f^+}&F^{++}\ar[r]&0}\] is pure by
assumption. Hence, it is split ($H^+$ is pure injective). So, there
is a morphism $h':F\lrt P$ such that $hh'=1_{F}$. Therefore, if
$g_1=gh':F\lrt G^+$, then $f^+g_1=f^+gh'=ihh'=i$. Furthermore, the
following commutative diagram

\[\xymatrix@C-0.9pc@R-0.9pc{0\ar[r]&F^+\ar[r]^f\ar[d]^{j}&G\ar[r]\ar[d]^k&H\ar[r]\ar[d]&0\\
0\ar[r]&F^{+++}\ar[r]^{f^{++}}&G^{++}\ar[r]&H^{++}\ar[r]&0. }\]
implies  that  $g_1^+kf = g_1^+f^{++}j = i^+j = 1_{F^+}$. So,
\eqref{mess0} splits.

\end{proof}

Now, we need to introduce some notations. Let $f:c\lrt d$ be a
morphism in $\CC$. We write $s(f)=c$ and $t(f)=d$. A path in $\CC$
is a sequence of morphisms. For a given $c\in\CC$, the functor
$E_c:\CA\lrt\CG$ is defined by $E_c(F)=F(c)$. This is an exact
functor with an exact  right adjoint $S^c:\CG\lrt\CA$ defined by
$S^c(\CF)(e)=\prod_{\mathrm{Hom}_{\CC}(e,c)}\CF$  (see \cite[pp.
317]{EEG} for more details). It can be shown that, if $\CE$ is an
injective cogenerator for $\CG$ then $S^c(\CE)$ is an injective
cogenerator for $\CA$. We say that $\CC$ is \textit{left}
(\textit{right}) \textit{rooted} if there exists no path  of the
form $\cdots\to\bullet\to\bullet\to\bullet$
($\bullet\to\bullet\to\bullet\to\cdots$). Clearly,  $\CC$ is left
rooted if and only if $\CC^{\mathrm{op}}$ is right rooted.

\begin{proposition}\label{sim3}
Let $I$ be an injective object in $\CA$. Then,
\begin{itemize}
\item[i)] For any  $c\in\CC$, $I(c)$ is  injective in $\CG$.
\item [ii)] For any $c\in\CC$, the canonical morphism $I(c)\lrt\prod_{s(f)=c}I(t(f))$ is a split epimorphism.
\end{itemize}
\end{proposition}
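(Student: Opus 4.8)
The plan is to handle the two parts separately: part (i) will follow formally from an adjunction, while part (ii) will be obtained by reducing to the injective cogenerators $S^{c}(\CE)$ and checking the splitting on these. Throughout I use that the evaluation functor $E_c$ sits in a string of adjoints $L_c\dashv E_c\dashv S^c$, where $L_c\colon\CG\lrt\CA$ is the left Kan extension along $\{c\}\hookrightarrow\CC$, given on objects by $L_c(G)(e)=\coprod_{\Hom_\CC(c,e)}G$. Only the right adjoint $S^c$ is recorded in the excerpt, but $L_c$ exists by the same token.

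For (i), I first observe that $L_c$ is exact: a short exact sequence in $\CG$ stays exact after forming, at each $e\in\CC$, the coproduct $\coprod_{\Hom_\CC(c,e)}(-)$, because coproducts are exact in the Grothendieck category $\CG$, and exactness in $\CA$ is tested objectwise. Since $L_c\dashv E_c$ with $L_c$ exact, the right adjoint $E_c$ preserves injectives: for injective $I$ the functor $\Hom_\CG(-,E_c I)\cong\Hom_\CA(L_c(-),I)$ is exact, so $I(c)=E_c(I)$ is injective in $\CG$.

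For (ii), the first step is a reduction. The property that the canonical morphism $\kappa_X\colon X(c)\lrt\prod_{s(f)=c}X(t(f))$ is a split epimorphism is stable under products and retracts. For products the canonical map is the product of the canonical maps, so a product of sections is a section. For a retract $X$ of $Y$ (say $r\colon Y\to X$, $s\colon X\to Y$ with $rs=\mathrm{id}_X$), naturality of $\kappa$ gives $\kappa_X\circ r_c=(\prod r_{t(f)})\circ\kappa_Y$, so if $\tau$ is a section of $\kappa_Y$ then $r_c\,\tau\,(\prod s_{t(f)})$ is a section of $\kappa_X$. Since $S^{c}(\CE)$ is an injective cogenerator, every injective $I$ is a retract of a product of copies of it, and it therefore suffices to prove (ii) for $X=S^{c'}(\CE)$. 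For such $X$ one has $X(e)=\CE^{\Hom_\CC(e,c')}$, and for $f$ with $s(f)=c$ the component $X(f)$ of $\kappa_X$ is the map on powers induced by precomposition $p\mapsto p\circ f$. Hence $\kappa_X$ is exactly $\CE^{\Psi}$, where $\Psi\colon\coprod_{s(f)=c}\Hom_\CC(t(f),c')\lrt\Hom_\CC(c,c')$ sends $(f,p)$ to $p\circ f$. Using the path structure of $\CC$ (uniqueness of the first arrow in a factorization; see \cite{EEG}), $\Psi$ is a split monomorphism of sets, and applying the contravariant, product-forming functor $\CE^{(-)}$ carries a split monomorphism of sets to a split epimorphism in $\CG$. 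This yields the splitting for the generators, and with the reduction above, for every injective $I$.

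The main obstacle is part (ii), and within it the verification that $\Psi$ is a split monomorphism: this is exactly where the combinatorics of $\CC$ enters, since for an arbitrary small category the composition map need not be injective, and one must exploit that each relevant morphism out of $c$ has a well-defined first arrow. By contrast, the reduction via products and retracts, and the bookkeeping identifying $\kappa_{S^{c'}(\CE)}$ with $\CE^{\Psi}$, are routine once the adjunctions $L_c\dashv E_c\dashv S^c$ and the action of $S^{c'}$ on morphisms are in hand.
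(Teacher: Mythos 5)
Your proposal is correct (modulo a caveat shared with the paper, discussed below) and its relationship to the paper's proof differs by part. For part (ii) you take \emph{the same} route as the paper: the paper's entire proof is the observation that (i) and (ii) are stable under products and direct summands, that every injective in $\CA$ is a direct summand of a product of various $S^{c'}(\CE)$, and that the cogenerators $S^{c'}(\CE)$ ``satisfy (i) and (ii)'' --- the last point being asserted without any computation. Your product/retract bookkeeping via naturality of $\kappa$ is exactly the verification the paper waves at, and your identification of $\kappa_{S^{c'}(\CE)}$ with $\CE^{\Psi}$, where $\Psi\colon\coprod_{s(f)=c}\Hom_{\CC}(t(f),c')\to\Hom_{\CC}(c,c')$ is composition, supplies precisely the content the paper's one-line assertion hides; so on (ii) your write-up is strictly more complete than the paper's. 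For part (i) you genuinely diverge: the paper obtains (i) from the same cogenerator reduction (for $S^{c'}(\CE)$ it amounts to noting that $S^{c'}(\CE)(c)=\prod_{\Hom_{\CC}(c,c')}\CE$ is a product of injectives), whereas you deduce it from the exact left adjoint of evaluation. Your $L_c$ is exactly the functor $S_c$ that the paper itself introduces later (just before Proposition \ref{shoh1}), so your argument is available within the paper's own toolkit, and it is cleaner: it needs no cogenerator, no rootedness, and works for any small $\CC$.

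The one step that deserves scrutiny is the claim that $\Psi$ is a split monomorphism, and you are right that this is where the combinatorics of $\CC$ enters. Unique first-arrow factorization is a property of categories freely generated by a quiver, not of arbitrary small categories: if the index ``$s(f)=c$'' is read as ranging over \emph{all} morphisms of $\CC$, then $(1_c,q)$ and $(q,1_{c'})$ have the same composite for any $q\colon c\to c'$, so $\Psi$ is never injective in the presence of nonidentity morphisms; and for the one-object category given by the free monoid on one generator one checks directly that the canonical map for $S^{c}(\CE)$ is then not even an epimorphism, so the cogenerator step itself fails under that reading. The statement, the paper's proof, and your proof must therefore all be read in the quiver setting of \cite{EEG}: $\CC$ freely generated by arrows, with the product in (ii) indexed by the generating arrows out of $c$, in which case every nonidentity morphism out of $c$ factors uniquely through its first arrow, $\Psi$ is injective (hence split mono when its domain is nonempty, and the empty case is harmless since $\CE^{\emptyset}=0$), and $\CE^{\Psi}$ is a split epimorphism in $\CG$ as you say. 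Since the paper suppresses this hypothesis entirely while you explicitly flag the exact point where it is used, this should count in your favor rather than as a gap peculiar to your argument.
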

\begin{proof}
Let $\CE$ be an injective cogenerator in $\CG$. Then, for any
$c\in\CC$, $S^c(\CE)$ satisfies in (i) and (ii). In addition, (i)
and (ii) are preserved under products and direct summands. Since
$S^c(\CE)$ (varying $c\in\CC$ and $\CE$) cogenerate $\CA$ and any
injective functor is a direct summand  of a  product of  various
$S^c(\CE)$. So, any injective object satisfies in $(i)$ and $(ii)$.
\end{proof}
The converse of this proposition holds if $\CC$ is right rooted (see
\cite[Theorem 4.2]{EEG}).
\begin{proposition}\label{sim2}
Let $F$ be a flat object in $\CA$. Then,
\begin{itemize}
\item[i)] For any  $c\in\CC$, $F(c)$ is  flat in $\CG$.
\item [ii)] For any $c\in\CC$, the canonical morphism $\coprod_{t(f)=c}F(s(f))\lrt F(c)$ is a pure
monomorphism.
\end{itemize}
\end{proposition}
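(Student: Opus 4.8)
The plan is to deduce both statements from Proposition \ref{sim3} by dualizing through $(\textmd{-})^+$. Since $F$ is flat, Proposition \ref{p0} gives that $F^+$ is injective in $\CB=\mathrm{Fun}(\CC^{\op},\CG)$. The point is that $\CB$ is again a functor category of exactly the same shape as $\CA$, with $\CC$ replaced by the small category $\CC^{\op}$; the adjoint pair $(E_c,S^c)$ and the entire proof of Proposition \ref{sim3} go through verbatim for $\CC^{\op}$, so I may apply Proposition \ref{sim3} to the injective object $F^+$ of $\CB$. Two elementary facts will be used repeatedly: evaluation commutes with dualization, $F^+(c)=F(c)^+$, and the contravariant functor $(\textmd{-})^+=\mathcal{H}om_\CG(\textmd{-},\CJ)$ sends coproducts to products, $\big(\coprod_i X_i\big)^+\cong\prod_i X_i^+$, and turns cokernels into kernels (it is contravariant left exact, being a self-adjoint contravariant functor).

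For (i): Proposition \ref{sim3}(i) applied to $F^+$ yields that $F^+(c)=F(c)^+$ is injective in $\CG$ for every $c\in\CC$. By the characterization of tensor-flat objects in $\CG$ (the one-object instance of Proposition \ref{p0}, cf.\ \cite{Sten68}), an object $M\in\CG$ is flat exactly when $M^+$ is injective; hence $F(c)$ is flat, which is (i).

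For (ii): Proposition \ref{sim3}(ii) applied to $F^+$ states that the canonical morphism $F^+(c)\lrt\prod F^+\big(t^{\op}(g)\big)$, the product taken over the morphisms $g$ of $\CC^{\op}$ with source $c$, is a split epimorphism. A morphism $g$ of $\CC^{\op}$ with source $c$ is precisely a morphism $f$ of $\CC$ with $t(f)=c$, and then $t^{\op}(g)=s(f)$; thus this product is $\prod_{t(f)=c}F(s(f))^+\cong\big(\coprod_{t(f)=c}F(s(f))\big)^+$, and under this identification the canonical morphism is exactly $\varphi^+$, where $\varphi:\coprod_{t(f)=c}F(s(f))\lrt F(c)$ is the canonical morphism in the statement (its $f$-component is $F(f)$, so the $f$-component of $\varphi^+$ is $F(f)^+=F^+(g)$). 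Therefore $\varphi^+$ is a split epimorphism.

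It remains to pass from ``$\varphi^+$ split epi'' to ``$\varphi$ pure mono''. First $\varphi$ is a monomorphism: writing $\iota:\Ker\varphi\lrt\coprod_{t(f)=c}F(s(f))$ for the kernel inclusion, $\iota^+\varphi^+=(\varphi\iota)^+=0$, and since $\varphi^+$ is an epimorphism this forces $\iota^+=0$; as $\CJ$ is a cogenerator $(\textmd{-})^+$ is faithful, so $\iota=0$ and $\Ker\varphi=0$. Now let $\CE'$ be the short exact sequence $0\rt\coprod_{t(f)=c}F(s(f))\rt F(c)\rt\Coker\varphi\rt0$ with first map $\varphi$. Applying $(\textmd{-})^+$ and using left exactness together with the fact that $\varphi^+$ is a (split) epimorphism, $(\CE')^+$ is a split short exact sequence, i.e.\ $\CE'$ is pure; thus $\varphi$ is a pure monomorphism, proving (ii). The only genuinely delicate point in the whole argument is the index bookkeeping in (ii): one must verify that passing to $\CC^{\op}$ converts the product over arrows \emph{out of} $c$ occurring in Proposition \ref{sim3} into the product over arrows \emph{into} $c$, and that $(\textmd{-})^+$ carries the canonical map $\varphi$ onto precisely the canonical map supplied by Proposition \ref{sim3}(ii) for $F^+$; the closing implication ``$\varphi^+$ split epi $\Rightarrow\varphi$ pure mono'' is routine and uses only the faithfulness of $(\textmd{-})^+$.
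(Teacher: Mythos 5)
Your proof is correct and takes essentially the same route as the paper: the paper's own proof is the terse reduction ``$F$ flat $\Rightarrow F^+$ injective in $\CB$ (Proposition \ref{p0}), then apply Proposition \ref{sim3}'', and you have merely made explicit what it leaves implicit --- that $\CB$ is the same construction over $\CC^{\op}$, the translation of arrows out of $c$ in $\CC^{\op}$ into arrows into $c$ in $\CC$ (turning the product of Proposition \ref{sim3}(ii) into the dual of the coproduct), and the routine dualization step that $\varphi^+$ split epi forces $\varphi$ to be a pure monomorphism. Your implicit use of ``$M\in\CG$ flat iff $M^+$ injective'' for part (i) is exactly the same appeal the paper makes when it cites Proposition \ref{p0}, so you are at the paper's level of rigor throughout.
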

\begin{proof}
We know that for a given flat functor $F$ in $\CA$,  $F^+$ is
injective in $\CB$. So, by Propositions \ref{p0} and \ref{sim3}, $F$
has the desired properties.
\end{proof}

Here, we introduce another adjoint pair of functors. For a given
$c\in\CC$, $E_c$ has an exact left adjoint $S_c:\CG\lrt\CA$ which is
defined by $S_c(\CF)(e)=\oplus_{\mathrm{Hom}_{\CC}(c,e)}\CF$ (see
\cite{Mit81} for more details).
\begin{proposition}\label{shoh1}
The category $\CA$ is  Grothendieck.
\end{proposition}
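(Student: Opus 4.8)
The plan is to verify the three defining properties of a Grothendieck category for $\CA$: that it is abelian, that it is cocomplete with exact filtered colimits (AB5), and that it possesses a generator. First I would observe that all of these reduce to the corresponding properties of $\CG$ via the principle that limits and colimits in a functor category with small source and (co)complete target are computed \emph{objectwise}. Concretely, since $\CC$ is small and $\CG$ is Grothendieck (hence abelian, complete and cocomplete), for any diagram $\{F_i\}$ in $\CA$ the assignments $c\mapsto\varinjlim_i F_i(c)$ and $c\mapsto\varprojlim_i F_i(c)$ define functors that serve as the colimit and the limit in $\CA$, and kernels and cokernels are formed the same way. This already makes $\CA$ abelian and cocomplete.

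For AB5 I would use that a filtered colimit in $\CA$ is exact precisely when it is exact after evaluation at every $c\in\CC$, since exactness of a sequence in $\CA$ is itself an objectwise notion by the previous step. As the evaluation at $c$ of a filtered colimit is the filtered colimit in $\CG$ of the evaluations, and $\CG$ satisfies AB5, exactness of filtered colimits in $\CA$ follows immediately.

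The only step requiring genuine input is the construction of a generator, and this is exactly where the left adjoints $S_c$ to the evaluation functors $E_c$ enter. Let $U$ be a generator of $\CG$. Since $\CC$ is small, $\{S_c(U)\}_{c\in\CC}$ is a set of objects of $\CA$, and I claim it is a generating set, so that the coproduct $\bigoplus_{c\in\CC}S_c(U)$ is a generator. To prove the claim, let $F'\subseteq F$ be a proper subobject in $\CA$; since subobjects are detected objectwise, there is some $c$ with $F'(c)\subsetneq F(c)$. As $U$ generates $\CG$, there is a morphism $U\lrt F(c)$ that does not factor through $F'(c)$, and the adjunction isomorphism $\Hom_\CA(S_c(U),F)\cong\Hom_\CG(U,F(c))$ converts it into a morphism $S_c(U)\lrt F$ which, by naturality of the adjunction, does not factor through $F'$.

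The main obstacle is therefore not any single computation but the careful bookkeeping in the generator step: one must check that the adjunction $S_c\dashv E_c$ translates the non-factorization of $U\lrt F(c)$ through $F'(c)$ into the non-factorization of $S_c(U)\lrt F$ through $F'$. Everything else (abelianness, cocompleteness, AB5) is a formal consequence of the objectwise description of (co)limits together with the hypothesis that $\CG$ is Grothendieck. Assembling these points shows that $\CA$ is abelian, cocomplete, has exact filtered colimits, and admits a generator, that is, $\CA$ is a Grothendieck category.
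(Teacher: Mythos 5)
Your proposal is correct and follows essentially the same route as the paper's proof: exactness and (co)limits in $\CA$ are computed objectwise, giving abelianness and exact direct limits, and the objects $S_c(U)$ (the paper uses $S_c(\CX)$ for $\CX$ ranging over a generating set of $\CG$) form a set of generators via the adjunction $S_c\dashv E_c$. You merely spell out the adjunction bookkeeping for the generator step that the paper leaves implicit.
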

\begin{proof}
A sequence in $\CA$ is exact if it is exact in any $c\in\CC$. This
causes that $\CA$ is an abelian category. In addition,  coproducts
and direct limits may be computed componentwise and so they are
exact. If $\CS$ is a set of generators in $\CG$ then, $\{S_c(\CX)|
c\in\CC, \CX\in\mathcal{S}\}$ is a set of generators for $\CA$.
Therefore, $\CA$ is  Grothendieck.
\end{proof}

\begin{proposition}\label{p1}
Let  $\xymatrix@C-0.7pc@R-0.9pc{\CE:0\ar[r]& F\ar[r]& G\ar[r]&
K\ar[r]& 0}$ be a pure exact sequence in $\CA$. Then $G$ is flat if
and only if $F$, $K$ are flats.
\end{proposition}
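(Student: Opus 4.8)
The plan is to transport the entire statement across the duality $(-)^+\colon\CA\lrt\CB$, which by Proposition~\ref{p0} converts flatness into injectivity, and then to read off the conclusion directly from the definition of purity. The point is that once one knows flat $\Leftrightarrow$ injective dual, the problem collapses to an elementary fact about injective objects and a direct sum.

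First I would apply the contravariant functor $(-)^+$ to the sequence $\CE$. By Definition~\ref{111}, saying that $\CE$ is pure means exactly that $\CE^+$ splits; hence $\CE^+$ is a split short exact sequence in $\CB$ and we obtain a direct-sum decomposition $G^+\cong F^+\oplus K^+$. One must be mildly careful here with variance: since $(-)^+$ is contravariant, the monomorphism $F\lrt G$ dualizes to a split epimorphism $G^+\lrt F^+$, while the epimorphism $G\lrt K$ dualizes so that $K^+$ becomes the split subobject. Either way the splitting yields $G^+\cong F^+\oplus K^+$.

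The key step is then Proposition~\ref{p0}, which says that an object of $\CA$ is flat if and only if its dual is injective in $\CB$. Thus the assertion ``$G$ is flat if and only if $F$ and $K$ are flat'' is equivalent to ``$G^+$ is injective if and only if $F^+$ and $K^+$ are injective''. Now $\CB=\mathrm{Fun}(\CC^{\mathrm{op}},\CG)$ is Grothendieck by the argument of Proposition~\ref{shoh1} applied to $\CC^{\mathrm{op}}$, so in $\CB$ injective objects are closed under direct summands and under finite direct sums. Feeding the splitting $G^+\cong F^+\oplus K^+$ into these two closure properties settles both implications at once: if $G^+$ is injective then its summands $F^+$ and $K^+$ are injective, and conversely if $F^+$ and $K^+$ are injective then so is their finite direct sum $G^+$.

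Chaining these equivalences gives $G$ flat $\iff G^+$ injective $\iff F^+$ and $K^+$ injective $\iff F$ and $K$ flat, which is the claim. I expect no serious obstacle: the purity hypothesis does all the real work by guaranteeing the splitting $G^+\cong F^+\oplus K^+$, and the only things to watch are that $\CE^+$ indeed splits in the sense of Definition~\ref{111} and that the closure of injectives in $\CB$ under summands and finite sums is invoked with the correct variance. Everything else is formal.
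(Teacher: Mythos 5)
Your proof is correct and takes essentially the same route as the paper's: purity gives the splitting $\CE^+$, hence $G^+\cong F^+\oplus K^+$, and injectivity of $G^+$ is then equivalent to injectivity of $F^+$ and $K^+$, which translates back to flatness via Proposition~\ref{p0}. The paper's proof is just a terser statement of exactly this argument (and note that closure of injectives under summands and finite sums holds in any abelian category, so invoking the Grothendieck property of $\CB$ is unnecessary).
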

\begin{proof}
By assumption,   $\CE^+$  splits. Then   $G^+$ is injective if and
only if $F^+$ and $K^+$ are injectives. Consequently,  by
Proposition \ref{p0}, $G$ is flat if and only if $F$ and $K$ are
flats.
\end{proof}

\begin{lemma}\label{p001}
Any direct limit of pure exact sequences is pure.
\end{lemma}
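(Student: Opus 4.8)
The plan is to replace the character-dual definition of purity by an intrinsic tensor criterion that manifestly commutes with direct limits. Let $\{\CE_i\}_{i\in I}$ be a directed system of pure short exact sequences $\CE_i\colon 0\to F_i\to G_i\to K_i\to 0$ in $\CA$ and set $\CE=\varinjlim_i\CE_i$. Since $\CA$ is Grothendieck (Proposition \ref{shoh1}) and direct limits there are exact and computed componentwise, $\CE$ is again a short exact sequence $0\to F\to G\to K\to 0$ with $F=\varinjlim_i F_i$, and similarly for $G$ and $K$. It then remains to show that $\CE^+$ splits in $\CB$.

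First I would set up the tensor product of functors $-\otimes_{\CC}-\colon\CB\times\CA\to\CG$, defined for $M\in\CB$ and $F\in\CA$ by the coend $M\otimes_{\CC}F=\int^{c\in\CC}M(c)\otimes F(c)$, which exists because $\CC$ is small and $\CG$ is cocomplete. Using the continuity of $\Hom_{\CG}(-,\CJ)$ together with the tensor--Hom adjunction $\Hom_{\CG}(M(c)\otimes F(c),\CJ)\cong\Hom_{\CG}(M(c),\mathcal{H}om_{\CG}(F(c),\CJ))$ applied componentwise, I obtain a natural isomorphism $\Hom_{\CB}(M,F^+)\cong\Hom_{\CG}(M\otimes_{\CC}F,\CJ)$. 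Since $\CJ$ is an injective cogenerator, a complex in $\CG$ is exact precisely when $\Hom_{\CG}(-,\CJ)$ carries it to an exact complex; combined with the isomorphism above and the fact that a short exact sequence in $\CB$ splits iff $\Hom_{\CB}(M,-)$ preserves its exactness for every $M$, this yields the key criterion: \emph{$\CE$ is pure if and only if $M\otimes_{\CC}\CE$ is exact in $\CG$ for every $M\in\CB$.}

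With this criterion in hand the conclusion is immediate. For each fixed $M\in\CB$, the functor $M\otimes_{\CC}-\colon\CA\to\CG$ is a left adjoint, its right adjoint being $G\mapsto\mathcal{H}om_{\CG}(M(-),G)$, and hence it preserves direct limits; therefore $M\otimes_{\CC}\CE\cong\varinjlim_i(M\otimes_{\CC}\CE_i)$. By purity of each $\CE_i$ and the criterion, every $M\otimes_{\CC}\CE_i$ is exact, and a direct limit of exact sequences in the Grothendieck category $\CG$ is again exact. Thus $M\otimes_{\CC}\CE$ is exact for all $M\in\CB$, and the criterion gives that $\CE$ is pure.

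The main obstacle is the key criterion itself, namely establishing the adjunction isomorphism $\Hom_{\CB}(M,F^+)\cong\Hom_{\CG}(M\otimes_{\CC}F,\CJ)$ and the splitting reformulation; once purity is reexpressed as exactness under the colimit-preserving functors $M\otimes_{\CC}-$, the passage to the direct limit is purely formal. An alternative route that avoids coends is to prove directly that $\CE$ is pure iff every morphism from a finitely presented functor into $K$ lifts to $G$, and then to factor such a morphism through some $K_i$ (possible because finitely presented objects commute with $\varinjlim$) and lift it using purity of $\CE_i$; there the difficulty shifts instead to verifying that $\CA$ is locally finitely presented and that finitely presented functors detect purity in the sense of Definition \ref{111}.
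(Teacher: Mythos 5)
Your proof is correct, but it follows a genuinely different route from the paper's. The paper's argument is a three-line dualization: it applies $(\textmd{-})^+$ to the direct system, notes that $(\varinjlim\CE_i)^+\cong\varprojlim\CE_i^+$ is an inverse limit of split exact sequences in $\CB$, and concludes that it splits. You instead internalize purity: via the functor tensor product $M\otimes_{\CC}F=\int^{c}M(c)\otimes F(c)$ and the adjunction $\Hom_{\CB}(M,F^+)\cong\Hom_{\CG}(M\otimes_{\CC}F,\CJ)$, you recast Definition \ref{111} as the intrinsic criterion that $M\otimes_{\CC}\CE$ be exact for every $M\in\CB$, after which the lemma is formal, since each $M\otimes_{\CC}\textmd{-}$ is a left adjoint and $\CG$ satisfies AB5. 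What each approach buys: the paper's proof is far shorter, but its key inference is delicate --- an inverse limit of split short exact sequences need not split (nor even remain right exact) when the splittings cannot be chosen compatibly with the transition maps, so as stated that step is really an assertion that the particular inverse systems arising as duals of direct systems behave well; your tensor criterion is precisely the standard way to substantiate such a claim, since it converts the problematic $\varprojlim$ of splittings into a harmless $\varinjlim$ of exact sequences before dualizing. The cost on your side is the overhead of setting up the coend and verifying the adjunction and the two directions of the criterion (one direction using exactness of $\Hom_{\CG}(\textmd{-},\CJ)$ for $\CJ$ injective, the other that an injective cogenerator reflects exactness, plus the observation that a short exact sequence splits as soon as $\Hom_{\CB}(M,\textmd{-})$ preserves its exactness for the relevant $M$); all of these are standard for a small $\CC$ and a closed symmetric monoidal Grothendieck $\CG$, so there is no gap. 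Your alternative sketch via finitely presented functors would shift the burden to local finite presentability of $\CA$ and to comparing categorical purity with the character-dual purity of Definition \ref{111}, which the paper itself only relates in one direction (Remark \ref{Abtino}), so the coend argument is the safer of your two options.
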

\begin{proof}
Let $(\xymatrix@C-0.7pc@R-0.9pc{\CE_i:0\ar[r]& F_i\ar[r]& G_i\ar[r]&
H_i\ar[r]& 0)_{i\in I}}$ be a direct system of pure exact sequences
in $\CA$. Then, $(\CE_i^+)_{i\in I}$  is an inverse system of split
exact sequences in $\CB$. So, $
(\Lim{\CE_i})^{+}\cong\underleftarrow{\mathrm{lim}}{\CE_i}^{+}$
splits. Therefore, $\Lim{\CE_i}$ is a pure exact sequence.
\end{proof}

\begin{remark}\label{Abtino}
It is known that in a $\lambda$-presentable Grothendieck  category
any $\lambda$-pure exact sequence is a direct limit of split exact
sequences (see \cite{AR94}, \cite{Cr94}) and hence by Lemma
\ref{p001}, it is pure in the sense of Definition \ref{111}.
Particularly, for any family $\{X_i\}_{i\in I}$ in $\CA$, the
categorical pure exact sequence
\begin{align}\label{n.3}
\xymatrix@C-0.7pc@R-0.9pc{ 0\ar[r]&K\ar[r]&\oplus_{i\in
I}X_i\ar[r]&\Lim X_i\ar[r]&0}
\end{align}
 is pure in the sense of Definition \ref{111}.
\end{remark}

\begin{proposition}\label{p2}
The class $\mathrm{Flat}\CA$ is closed under pure subobjects, pure
quotients and direct limits.
\end{proposition}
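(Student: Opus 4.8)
The plan is to derive all three closure properties from Proposition \ref{p1} together with the behaviour of $(\textmd{-})^+$ on coproducts. First I would dispose of pure subobjects and pure quotients at once. Suppose $G$ is flat and $\xymatrix@C-0.7pc@R-0.9pc{0\ar[r]& F\ar[r]& G\ar[r]& K\ar[r]& 0}$ is a pure exact sequence; then Proposition \ref{p1} immediately gives that both $F$ and $K$ are flat. Since a pure subobject of $G$ is precisely such an $F$ and a pure quotient of $G$ is precisely such a $K$, the class $\mathrm{Flat}\CA$ is closed under pure subobjects and under pure quotients.

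The remaining task is closure under direct limits, and the crucial intermediate step is that an arbitrary coproduct of flat functors is flat. Let $\{F_i\}_{i\in I}$ be a family of flat functors. By Proposition \ref{p0} each $F_i^+$ is injective in $\CB$. The functor $(\textmd{-})^+=\mathcal{H}om_\CG(\textmd{-},\CJ)$ is contravariant and, being built from the right adjoint of $\textmd{-}\otimes\textmd{-}$, converts coproducts into products; hence $(\oplus_{i\in I}F_i)^+\cong\prod_{i\in I}F_i^+$. A product of injective objects is injective in the Grothendieck category $\CB$, so $(\oplus_{i\in I}F_i)^+$ is injective and therefore, again by Proposition \ref{p0}, $\oplus_{i\in I}F_i$ is flat.

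To finish, I would invoke Remark \ref{Abtino}: for the directed system $\{F_i\}_{i\in I}$ the canonical exact sequence $\xymatrix@C-0.7pc@R-0.9pc{0\ar[r]&K\ar[r]&\oplus_{i\in I}F_i\ar[r]&\Lim F_i\ar[r]&0}$ is pure in the sense of Definition \ref{111}. Its middle term is flat by the previous paragraph, and $\Lim F_i$ is a pure quotient of it, so the closure under pure quotients already established shows that $\Lim F_i$ is flat.

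The only point requiring genuine care is the isomorphism $(\oplus_{i\in I}F_i)^+\cong\prod_{i\in I}F_i^+$, which rests on the fact that $\textmd{-}\otimes\textmd{-}$ preserves coproducts and hence its adjoint $\mathcal{H}om_\CG(\textmd{-},\CJ)$ carries coproducts to products; once this duality is in hand, the argument is a formal combination of Propositions \ref{p0} and \ref{p1} with Remark \ref{Abtino}.
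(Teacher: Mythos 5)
Your proof is correct and follows essentially the same route as the paper: pure subobjects and quotients via Proposition \ref{p1}, flatness of coproducts via the isomorphism $(\oplus_{i\in I}F_i)^+\cong\prod_{i\in I}F_i^+$ and Proposition \ref{p0}, and direct limits via the pure exact sequence of Remark \ref{Abtino}. The only difference is that you spell out the adjunction argument behind the coproduct-to-product duality, which the paper leaves implicit.
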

\begin{proof}
By Proposition \ref{p1}, $\mathrm{Flat}\CA$ is closed under pure
subobjects and pure quotients. If $\{X_i\}_{i\in I}$ is a family of
flat functors, then $(\oplus_{i\in I}X_i)^+=\prod_{i\in I} X_i^+$ is
injective in $\CB$ and so by Proposition \ref{p0}, $\oplus_{i\in
I}X_i$ is flat. Hence,  by Remark \ref{Abtino} and Proposition
\ref{p1},  $\mathrm{Flat}\CA$ is closed under direct limits.
\end{proof}
\begin{proposition}\label{shoh10}
The category $\CA$ has enough flat objects.
\end{proposition}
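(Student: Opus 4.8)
The plan is to reduce the statement to the flatness of functors of the special form $S_c(\CF)$ with $\CF\in\mathrm{Flat}\CG$, and then to cover an arbitrary functor by a coproduct of such objects. Recall that for each $c\in\CC$ the evaluation $E_c\colon\CA\lrt\CG$ has the exact left adjoint $S_c$ with $S_c(\CF)(e)=\oplus_{\Hom_\CC(c,e)}\CF$, and that $\CG$, being the source of the complete cotorsion theory $(\mathrm{Flat}\CG,\mathrm{Cot}\CG)$, has enough flat objects: every object of $\CG$ admits an epimorphism from a flat one.

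The key step is to show that $S_c$ sends flat objects of $\CG$ to flat functors in $\CA$, and for this I would compute the effect of $(\textmd{-})^+$ on $S_c(\CF)$. Since $(\textmd{-})^+$ turns the coproducts defining $S_c(\CF)$ into products, one gets for every $e\in\CC$
\[ S_c(\CF)^+(e)=\Big(\oplus_{\Hom_\CC(c,e)}\CF\Big)^+\cong\prod_{\Hom_\CC(c,e)}\CF^+, \]
which is exactly the value at $e$ of $S^c(\CF^+)$, where $S^c$ denotes the right adjoint of the evaluation $E_c\colon\CB\lrt\CG$ (here $\Hom_{\CC^{\op}}(e,c)=\Hom_\CC(c,e)$). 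One checks this identification is natural in $e$, so that $S_c(\CF)^+\cong S^c(\CF^+)$ in $\CB$. Now $\CF$ flat in $\CG$ forces $\CF^+$ to be injective in $\CG$ (equivalently, the one-object case of Proposition \ref{p0}), and $S^c$, being right adjoint to the exact functor $E_c$, preserves injectivity; hence $S_c(\CF)^+$ is injective in $\CB$ and $S_c(\CF)$ is flat by Proposition \ref{p0}.

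With this in hand the construction is routine. For a given functor $F$ and each $c\in\CC$ choose a flat object $\CF_c\in\CG$ together with an epimorphism $\CF_c\lrt F(c)=E_c(F)$; by the adjunction $S_c\dashv E_c$ this corresponds to a morphism $\theta_c\colon S_c(\CF_c)\lrt F$. Assembling these over the (small) set of objects of $\CC$ yields $\theta=\oplus_c\theta_c\colon\coprod_{c\in\CC}S_c(\CF_c)\lrt F$. To see that $\theta$ is an epimorphism it suffices, since exactness is tested objectwise, to check surjectivity at each $d\in\CC$; there the triangle identity for the adjunction shows that $E_d(\theta_d)$ precomposed with the unit $\CF_d\lrt E_dS_d(\CF_d)=S_d(\CF_d)(d)$ recovers the chosen epimorphism $\CF_d\lrt F(d)$, so already the $d$-th summand surjects onto $F(d)$. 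Finally the source $\coprod_c S_c(\CF_c)$ is flat, being a coproduct of flat functors (Proposition \ref{p2}). Thus $\CA$ has enough flat objects.

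I expect the only genuine point to watch is the naturality behind the isomorphism $S_c(\CF)^+\cong S^c(\CF^+)$, that is, the assertion that $(\textmd{-})^+$ really intertwines the left adjoint $S_c$ on $\CA$ with the right adjoint $S^c$ on $\CB$. Once this compatibility of the two adjunctions with the duality $(\textmd{-})^+$ is pinned down — the identification on objects is immediate, while on morphisms it amounts to tracking the index bijections $\Hom_\CC(c,e)\lrt\Hom_\CC(c,e')$ — everything else is formal.
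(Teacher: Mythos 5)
Your proof is correct, and its overall skeleton is exactly the paper's: cover a given functor $G$ by $\coprod_{c\in\CC}S_c(\CF_c)\lrt G$, where $\CF_c\lrt G(c)$ is a flat precover in $\CG$ (available since $(\mathrm{Flat}\CG,\mathrm{Cot}\CG)$ is complete), check surjectivity objectwise via the adjunction $S_c\dashv E_c$, and conclude flatness of the coproduct from Proposition \ref{p2}. The genuine difference is in the key lemma that $S_c(\CF)$ is flat for $\CF\in\mathrm{Flat}\CG$. The paper argues this by ``considering $S_c(\CF)$ as a functor over a left rooted category'' and citing Proposition \ref{sim3}; but Proposition \ref{sim3} only gives \emph{necessary} conditions for injectivity, and the converse direction needed there requires rootedness and the representation-theoretic result \cite[Theorem 4.2]{EEG}, so the paper's justification is terse and leans on external input. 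Your route instead computes $S_c(\CF)^+\cong S^c(\CF^+)$ directly --- the duality $(\textmd{-})^+$ exchanges the coproduct-type left adjoint $S_c$ on $\CA$ for the product-type right adjoint $S^c$ on $\CB$ --- and then uses that $S^c$, being right adjoint to the exact evaluation $E_c$ on $\CB$, preserves injectives; this is self-contained, avoids any rootedness consideration, and is arguably more rigorous than the paper's one-line appeal. Two small points to pin down, both manageable: first, the naturality of the isomorphism $S_c(\CF)^+\cong S^c(\CF^+)$, which you flag yourself and which indeed reduces to tracking the index maps $\Hom_\CC(c,e)\lrt\Hom_\CC(c,e')$; second, your parenthetical justification that $\CF$ flat in $\CG$ forces $\CF^+$ injective via ``the one-object case of Proposition \ref{p0}'' is slightly off, since Proposition \ref{p0} concerns flatness in the sense of Definition \ref{safi11}, whereas $\mathrm{Flat}\CG$ is by the Setup the class of \emph{tensor} flat objects --- but the needed implication is standard: $\Hom_\CG(\textmd{-},\mathcal{H}om_\CG(\CF,\CJ))\cong\Hom_\CG(\textmd{-}\otimes\CF,\CJ)$ is exact because $\textmd{-}\otimes\CF$ is exact and $\CJ$ is injective, and the paper itself uses this fact silently.
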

\begin{proof}
Let $c\in\CC$ and $\CF$ be a flat object in $\CG$. Then, we can
consider $S_c(\CF)$ as a functor over a left rooted category and
hence  by Proposition \ref{sim3}, $(S_c(\CF))^+$ is  injective. It
follows that, $S_c(\CF)$ is flat (Proposition \ref{p0}). For a given
$G\in\CA$ and $c\in\CC$, let $\CF(c)\to G(c)\to 0$ be a flat
precover of $G(c)$. Then, we have an exact sequence
$\oplus_{c\in\CC}S_c(\CF(c))\to G\to 0$ where
$\oplus_{c\in\CC}S_c(\CF(c))$ is  flat by Proposition \ref{p2}.
\end{proof}

\begin{proposition}\label{p3}
The pair $(\mathrm{Flat}\CA,\mathrm{Cot}\CA)$ is a complete
cotorsion theory in  $\CA$.
\end{proposition}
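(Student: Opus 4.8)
The plan is to exhibit $(\mathrm{Flat}\CA,\mathrm{Cot}\CA)$ as a cotorsion theory that is cogenerated by a \emph{set} of flat functors, and then to invoke the Eklof--Trlifaj machinery, which is available here because $\CA$ is a Grothendieck category by Proposition \ref{shoh1}. By definition $\mathrm{Cot}\CA=(\mathrm{Flat}\CA)^\perp$, so the content of the statement is twofold: first that the pair is genuinely a cotorsion theory, i.e. that $\mathrm{Flat}\CA={}^\perp(\mathrm{Cot}\CA)$, and second that it is complete.

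Before the main argument I would record the elementary closure properties of $\mathrm{Flat}\CA$ that feed into it. The functor $(\textmd{-})^+$ is exact and contravariant, so an exact sequence $0\to A\to B\to C\to0$ in $\CA$ yields an exact sequence $0\to C^+\to B^+\to A^+\to0$ in $\CB$; if $A^+$ and $C^+$ are injective this splits, whence $B^+$ is injective and $B$ is flat by Proposition \ref{p0}. Thus $\mathrm{Flat}\CA$ is closed under extensions, and the same criterion shows it is closed under direct summands. Closure under direct limits, pure subobjects and pure quotients is already Proposition \ref{p2}, and $\CA$ has enough flat objects by Proposition \ref{shoh10}.

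The heart of the proof is a deconstructibility statement: I would fix a regular cardinal $\kappa$ large enough to bound the data of $\CC$ and a generating set of $\CG$, and show that every flat functor $F$ admits a continuous increasing filtration $0=F_0\subseteq F_1\subseteq\cdots\subseteq F_\alpha\subseteq\cdots$ with union $F$ in which every quotient $F_{\alpha+1}/F_\alpha$ is flat and $\kappa$-presentable. Since $\CA$ is locally presentable, every object is the $\kappa$-directed union of its $\kappa$-presentable subobjects; the task is to refine such a union into a filtration whose successive quotients are again \emph{flat}, using that flatness in $\CA$ is detected componentwise together with the purity of the canonical maps (Proposition \ref{sim2}) and the assumed completeness of $(\mathrm{Flat}\CG,\mathrm{Cot}\CG)$ in the base. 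This Kaplansky-type bounding is the step I expect to be the main obstacle, as it is precisely where the set-theoretic control enters and where the behaviour of flatness under the building blocks $S_c$ must be combined with Proposition \ref{p1}.

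Granting the filtration, let $\mathcal{S}$ be a representative set of flat $\kappa$-presentable functors. Eklof's lemma shows that ${}^\perp(\mathcal{S}^\perp)$ is closed under $\mathcal{S}$-filtrations, so every flat functor lies in it and $\mathcal{S}^\perp\subseteq\mathrm{Cot}\CA$; the reverse inclusion $\mathrm{Cot}\CA\subseteq\mathcal{S}^\perp$ is clear since $\mathcal{S}\subseteq\mathrm{Flat}\CA$, giving $\mathcal{S}^\perp=\mathrm{Cot}\CA$. The cotorsion pair cogenerated by the set $\mathcal{S}$ is complete by Eklof--Trlifaj, so any $X\in{}^\perp(\mathrm{Cot}\CA)$ fits into an exact sequence $0\to C\to S\to X\to0$ with $C\in\mathrm{Cot}\CA$ and $S$ an $\mathcal{S}$-filtered, hence flat, functor; as $\Ext^1_\CA(X,C)=0$ this splits, so $X$ is a summand of $S$ and therefore flat. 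This yields $\mathrm{Flat}\CA={}^\perp(\mathrm{Cot}\CA)$, confirming that the pair is a cotorsion theory, and the completeness is exactly the conclusion of Eklof--Trlifaj for the set $\mathcal{S}$.
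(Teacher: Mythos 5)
Your argument funnels entirely through the Kaplansky-type deconstructibility claim -- that every flat functor admits a continuous filtration with $\kappa$-presentable flat subquotients -- and you never actually prove it; you yourself flag it as ``the main obstacle.'' This is a genuine gap, and the tool you point to for closing it is the wrong one: Proposition \ref{sim2} gives only \emph{necessary} conditions satisfied by a flat functor (componentwise flatness and purity of the canonical maps), and the converse is not available in this paper -- indeed, for the analogous injective statement the paper explicitly remarks that the converse of Proposition \ref{sim3} needs $\CC$ to be right rooted. Since no rootedness hypothesis is assumed here, ``flatness is detected componentwise'' is not a fact you may lean on, and the refinement of a $\kappa$-directed union into a filtration with flat quotients, as you sketch it, would fail for a general small category $\CC$. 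If you insist on the Eklof--Trlifaj route, the viable mechanism is the purity theory already developed: $\CA$ is locally presentable, every subobject of a functor is contained in a $\lambda$-pure subobject of bounded presentability, $\lambda$-pure exact sequences are pure in the sense of Definition \ref{111} by Remark \ref{Abtino} and Lemma \ref{p001}, and $\mathrm{Flat}\CA$ is closed under pure subobjects and pure quotients (Propositions \ref{p1} and \ref{p2}); but one must still verify, e.g., that purity is suitably transitive so that successive quotients $F_{\alpha+1}/F_\alpha$ of pure subobjects of $F$ are pure quotients of flats, and none of this is in your text.

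The paper sidesteps the set-theoretic bookkeeping entirely, which is why its proof is short: by Proposition \ref{p2} the class $\mathrm{Flat}\CA$ is closed under direct limits, so \cite[Theorem 3.3]{E} makes it a \emph{covering} class; combined with the existence of enough flats (Proposition \ref{shoh10}) and closure under extensions, Wakamatsu's lemma in the form \cite[Corollary 7.2.3]{EJ} shows flat covers are special flat precovers, and the special cotorsion preenvelope is then produced by Salce's pullback trick against the injective envelope $0\to G\to I\to K\to 0$, pulling back along a special flat precover of $K$. Your closing argument -- splitting $0\to C\to S\to X\to 0$ for $X\in{}^\perp(\mathrm{Cot}\CA)$ and using closure of $\mathrm{Flat}\CA$ under direct summands to conclude $\mathrm{Flat}\CA={}^\perp(\mathrm{Cot}\CA)$ -- is correct and in fact supplies a detail the paper leaves implicit; but as submitted, your proof is incomplete at precisely its load-bearing step.
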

\begin{proof}
By Proposition  \ref{p2},  $\mathrm{Flat}\CA$ is closed under direct
limits. So, by \cite[Theorem 3.3]{E}, it is a covering class.
Combine this with Proposition \ref{shoh10} and deduce that any
object in $\CA$ has a special flat precover (\cite[Corollary
7.2.3.]{EJ}). Let $G$ be a given functor and
$\xymatrix@C-0.7pc@R-0.9pc{ 0\ar[r]&G\ar[r]&I\ar[r]&K\ar[r]&0}$ be
its injective envelope. Then the pullback diagram

\[\xymatrix@C-0.7pc@R-0.9pc{&&0\ar[d]&0\ar[d]\\&&C\ar[d]\ar@{=}[r]&C\ar[d]\\0\ar[r]&G\ar[r]\ar@{=}[d]&P\ar[r]\ar[d]&F\ar[r]\ar[d]&0\\
0\ar[r]&G\ar[r]&I\ar[r]\ar[d]&K\ar[r]\ar[d]&0\\
&&0&0}\] completes the proof where $\xymatrix@C-0.7pc@R-0.9pc{
0\ar[r]&C\ar[r]&F\ar[r]&K\ar[r]&0}$ is a special flat precover of
$K$.
\end{proof}

\vspace{0.2cm}

\section{Flat cotorsion theory in $\C(\CA)$}

This section is devoted to the study of  flat cotorsion theory in
$\C(\CA)$.  An acyclic complex $\cdots \rt X^{n-1}
\st{\pa^{n-1}}{\rt} X^n \st{\pa^{n}}{\rt} X^{n+1} \rt \cdots$ in
$\CA$  is called \textit{pure} \textit{acyclic} if for any $n\in\Z$,
the exact sequence
\[\xymatrix@C-0.7pc@R-0.9pc{0\ar[r]&\Ker\pa^{n}\ar[r]&X^n\ar[r]&\im\pa^{n}\ar[r]&0}\]
is pure. A complex $$\F:\cdots \rt F^{n-1} \st{\pa_\F^{n-1}}{\rt}
F^n \st{\pa_\F^{n}}{\rt} F^{n+1} \rt \cdots$$ in $\CA$ is called
\textit{flat} if it is  a pure acyclic complex of flats or
equivalently, it is an acyclic complex such that for any $n\in\Z$,
$\Ker\pa_\F^{n}$ is flat (Proposition \ref{p0}). Let $\CPFA$ be the
class  of all flat complexes in $\CA$. A complex $\C$ in $\CA$ is
called dg-cotorsion if $\C\in\CPFA^\perp$. An acyclic complex
$\C=(C^n,\partial_\C^n)$ of cotorsion objects in $\CA$ is called
cotorsion if for any $n\in\Z$, $\Ker\partial^n_\C$ is cotorsion.
Clearly, dg-cotorsion complexes are not necessarily cotorsion. But,
it can be shown that any cotorsion complex is dg-cotorsion.

\begin{proposition} \label{cotor1}
Let $\mathbf{G}=(G^i, \pa_\mathbf{G}^i)\in \CPFA^{\perp}$. Then, for
any $i\in \Z$, $G^i$ is a cotorsion functor.

\end{proposition}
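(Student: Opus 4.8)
The plan is to reduce the statement about the complex $\mathbf{G}$ to a componentwise statement in $\CA$, by probing $\mathbf{G}$ with suitable flat complexes. Fix $i\in\Z$ and an arbitrary flat functor $F$ in $\CA$; it suffices to prove $\Ext^1_\CA(F,G^i)=0$, since then $G^i\in(\mathrm{Flat}\CA)^\perp=\mathrm{Cot}\CA$, i.e. $G^i$ is cotorsion, and letting $F$ and $i$ vary finishes the proof. The test object I would use is the disk complex $\mathrm{D}^{i+1}(F)$, namely the complex with $F$ in degrees $i$ and $i+1$, the identity differential between them, and $0$ elsewhere. Each of the sequences $0\to\Ker\pa^n\to(\mathrm{D}^{i+1}(F))^n\to\im\pa^n\to0$ is split (the only nonzero ones are $0\to0\to F\to F\to0$ and $0\to F\to F\to0\to0$), hence pure, and every cycle object $\Ker\pa^n$ is either $0$ or $F$ and so is flat. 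Thus $\mathrm{D}^{i+1}(F)$ is a pure acyclic complex of flats, that is $\mathrm{D}^{i+1}(F)\in\CPFA$.

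The heart of the argument is the natural isomorphism $\Ext^1_{\C(\CA)}(\mathrm{D}^{i+1}(F),\mathbf{X})\cong\Ext^1_\CA(F,X^i)$. First I would record the underlying adjunction $\Hom_{\C(\CA)}(\mathrm{D}^{i+1}(F),\mathbf{X})\cong\Hom_\CA(F,X^i)$: a chain map out of the disk complex is freely determined by its degree-$i$ component $F\to X^i$, the degree-$(i+1)$ component being forced by commutativity with the identity differential (and the remaining chain condition holding automatically because $\pa_\mathbf{X}^{i+1}\pa_\mathbf{X}^i=0$). Hence $\mathrm{D}^{i+1}(-)$ is left adjoint to the evaluation functor $\mathbf{X}\mapsto X^i$, and both functors are exact. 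Since $\CA$ is Grothendieck (Proposition \ref{shoh1}), so is $\C(\CA)$, and $\mathbf{X}$ admits an injective resolution $\mathbf{X}\to\mathbf{I}^{\bullet}$ in $\C(\CA)$. As evaluation is right adjoint to the exact functor $\mathrm{D}^{i+1}(-)$ it preserves injectives, and being exact it carries $\mathbf{I}^{\bullet}$ to an injective resolution of $X^i$ in $\CA$. Applying the adjunction termwise then yields
$$\Ext^1_{\C(\CA)}(\mathrm{D}^{i+1}(F),\mathbf{X})=\mathrm{H}^1\Hom_{\C(\CA)}(\mathrm{D}^{i+1}(F),\mathbf{I}^{\bullet})=\mathrm{H}^1\Hom_\CA(F,(\mathbf{I}^{\bullet})^i)=\Ext^1_\CA(F,X^i).$$

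Finally I would combine the two ingredients with $\mathbf{X}=\mathbf{G}$: since $\mathbf{G}\in\CPFA^{\perp}$ and $\mathrm{D}^{i+1}(F)\in\CPFA$, the group $\Ext^1_{\C(\CA)}(\mathrm{D}^{i+1}(F),\mathbf{G})$ vanishes, so $\Ext^1_\CA(F,G^i)=0$; as $F$ runs through all flat functors this shows each $G^i$ is cotorsion. I expect the main obstacle to be the $\Ext^1$-transfer isomorphism itself: because $\CA$ is only assumed Grothendieck, $\C(\CA)$ need not have enough projectives, so one cannot resolve $\mathrm{D}^{i+1}(F)$ projectively, and the comparison of derived functors must instead be routed through an injective resolution of the target together with the exactness of both the disk and evaluation functors. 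Checking that evaluation preserves injectives and that this identification of $\Ext$-groups is the natural one is the delicate point; by contrast, the purity and flatness of the disk complex are immediate.
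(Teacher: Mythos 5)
Your proof is correct, and it rests on the same test objects as the paper's proof --- the disk complexes $\mathrm{D}^{i+1}(F)$ on a flat functor $F$, which both you and the authors exploit as members of $\CPFA$ --- but the mechanics are genuinely different. The paper argues at the level of extensions: it takes a special cotorsion preenvelope $0\to G^i\xrightarrow{f} C\to F\to 0$ of $G^i$ (available because $(\mathrm{Flat}\CA,\mathrm{Cot}\CA)$ is a complete cotorsion theory, Proposition \ref{p3}), pushes it out along $\pa^i_{\mathbf{G}}$ to produce a short exact sequence of complexes $0\to\mathbf{G}\to\mathbf{H}\to\mathbf{F}\to 0$ whose cokernel $\mathbf{F}$ is exactly your disk complex $\mathrm{D}^{i+1}(F)$, observes that this sequence splits since $\mathbf{G}\in\CPFA^{\perp}$, and concludes that $G^i$ is a direct summand of the cotorsion object $C$, hence cotorsion. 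You instead establish the natural isomorphism $\Ext^1_{\C(\CA)}(\mathrm{D}^{i+1}(F),\mathbf{X})\cong\Ext^1_{\CA}(F,X^i)$ from the adjunction between $\mathrm{D}^{i+1}(\textmd{-})$ and degree-$i$ evaluation, and apply it with $\mathbf{X}=\mathbf{G}$. Each approach buys something: yours never invokes the completeness of the flat cotorsion pair (no preenvelope is needed) and yields the vanishing $\Ext^1_{\CA}(F,G^i)=0$ for every flat $F$ directly, at the cost of the homological apparatus --- that $\C(\CA)$ is Grothendieck and so has enough injectives, and that evaluation preserves injectives as a right adjoint of an exact functor --- all of which you verify correctly; the paper's argument is more elementary and self-contained but leans on Proposition \ref{p3} and on closure of the cotorsion class under summands. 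In fact, the paper's pushout diagram is precisely the Yoneda-level incarnation of one direction of your Ext-transfer map: pushing the preenvelope sequence out along $\pa^i_{\mathbf{G}}$ realizes the image of its class under $\Ext^1_{\CA}(F,G^i)\to\Ext^1_{\C(\CA)}(\mathrm{D}^{i+1}(F),\mathbf{G})$, and had you proved your isomorphism by this pushout (one direction) and degreewise evaluation of an extension of complexes (the other), the two proofs would essentially coincide while avoiding injective resolutions. Finally, the delicate point you flagged is handled properly in your write-up: you resolve the target $\mathbf{G}$ injectively rather than resolving the disk complex, so the possible absence of projectives in $\C(\CA)$ never intervenes.
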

\begin{proof}
For $i\in\Z$, assume that
$$\xymatrix@C-.5pc{0\ar[r]&G^i\ar[r]^f&C\ar[r]^g&F\ar[r]&0}$$ be
a cotorsion   preenvelope of $G^i$. By the  pushout of $f$ and
$\pa^i_\mathbf{G}$, we deduce the following exact sequence

\[\xymatrix@C-0.7pc@R-0.9pc{&\vdots\ar[d]&\vdots\ar[d]&\vdots\ar[d]&\\0\ar[r]&G^{i-1}\ar@{=}[r]\ar[d]^{\pa^{i-1}}
&G^{i-1}\ar[r]\ar[d]^{f\circ \pa^{i-1}}&0\ar[d]\ar[r]&0\\
0\ar[r]&G^i\ar[r]^f\ar[d]^{\pa^{i}}
&C\ar[r]\ar[d]^s&F\ar@{=}[d]\ar[r]&0
\\0\ar[r]&G^{i+1}\ar[r]\ar[d]
&G\ar[r]\ar[d]&F\ar[d]\ar[r]&0\\0\ar[r]&G^{i+2}\ar@{=}[r]\ar[d]
&G^{i+1}\ar[r]\ar[d]&0\ar[d]\ar[r]&0\\
&\vdots&\vdots&\vdots}\] of complexes  which is split by the choice
of $\mathbf{G}$ (the last column is a flat complex). Then, $G^i$ is
cotorsion.

\end{proof}

Let $\C(\mathrm{dg}\textmd{-}\mathrm{Cot}\CA)$ be the class of all
dg-cotorsion complexes in $\CA$.  We prove that the pair
$(\CPFA,\C(\mathrm{dg}\textmd{-}\mathrm{Cot}\CA))$ is a complete
cotorsion theory in $\C(\CA)$. First of all, we prove the assertion
in the category of all short exact sequences in $\CA$.

\begin{lemma}\label{107}
Any short exact sequence in $\CA$ has a special flat precover and a
special  cotorsion preenvelope.
\end{lemma}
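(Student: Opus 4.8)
The plan is to read the statement inside the category $\CS$ of short exact sequences of $\CA$ and to construct the required covers and envelopes from the complete cotorsion theory $(\mathrm{Flat}\CA,\mathrm{Cot}\CA)$ of Proposition \ref{p3} by a horseshoe construction. Write an object of $\CS$ as $\CE:0\rt A\rt B\rt C\rt 0$; I call $\CE$ \emph{flat} when $A$, $B$, $C$ are flat, and I note that by Definition \ref{safi11} the flatness of the right-hand term $C$ already forces $\CE$ to be pure, so a flat object of $\CS$ is nothing but a $3$-term flat complex, and by Proposition \ref{p1} the middle term of such a pure sequence is flat as soon as the two ends are. A \emph{special flat precover} of $\CE$ is an exact sequence $0\rt\DD\rt\F\rt\CE\rt 0$ in $\CS$ with $\F$ flat and $\DD$ in the right orthogonal of the flat objects, and a \emph{special cotorsion preenvelope} is an exact sequence $0\rt\CE\rt\CK\rt\F''\rt0$ in $\CS$ with $\CK$ in that same right orthogonal and $\F''$ flat. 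Thus both halves of the lemma hinge on one orthogonality fact, which I isolate below.

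For the precover I first apply Proposition \ref{p3} to the two end terms, getting special flat precovers $0\rt D_A\rt F_A\rt A\rt0$ and $0\rt D_C\rt F_C\rt C\rt0$ with $F_A,F_C$ flat and $D_A,D_C\in\mathrm{Cot}\CA$. Because $\mathrm{Flat}\CA$ is closed under extensions and $\Ext^1_\CA(F_C,D_A)=0$ (as $F_C\in\mathrm{Flat}\CA$ and $D_A\in\mathrm{Cot}\CA=\mathrm{Flat}\CA^{\perp}$), the standard horseshoe construction for a complete cotorsion theory yields a special flat precover $0\rt D_B\rt F_B\rt B\rt0$ of $B$ fitting into a commutative $3\times3$ diagram: its rows are the three precover sequences, while its columns are short exact sequences $\F:0\rt F_A\rt F_B\rt F_C\rt0$ and $\DD:0\rt D_A\rt D_B\rt D_C\rt0$ together with $\CE$ itself. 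Here $F_B$ is flat (an extension of flats) and $D_B$ is cotorsion ($\mathrm{Cot}\CA$ is likewise closed under extensions), so $\F$ is a flat object of $\CS$ and the whole diagram is exactly an exact sequence $0\rt\DD\rt\F\rt\CE\rt0$ in $\CS$ with all three terms of $\DD$ in $\mathrm{Cot}\CA$.

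What remains, and what I expect to be the main obstacle, is to prove once and for all that any short exact sequence $\DD$ whose three terms lie in $\mathrm{Cot}\CA$ belongs to the right orthogonal of the flat objects of $\CS$; granting this, the previous paragraph gives the special flat precover, and dually one obtains the preenvelope. To see the orthogonality, take a flat object $\F'=(0\rt F_1'\rt F_2'\rt F_3'\rt0)$ and an extension $0\rt\DD\rt\CZ\rt\F'\rt0$ in $\CS$; component-wise this is a grid of three short exact sequences $0\rt D_i\rt Z_i\rt F_i'\rt0$ in $\CA$, each split because $\Ext^1_\CA(F_i',D_i)=0$. The real work is to choose the splittings $s_i\colon F_i'\rt Z_i$ compatibly with the vertical maps of $\F'$ and $\CZ$; I would fix a splitting in the bottom degree and correct the two remaining ones by a finite diagram chase, again using the vanishing of $\Ext^1_\CA$ between flat and cotorsion functors and the purity of $\F'$. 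With the orthogonality in hand, the cotorsion preenvelope is produced by the mirror-image argument: apply Proposition \ref{p3} to take special cotorsion preenvelopes of $A$ and $C$, use the dual horseshoe construction (where $\mathrm{Cot}\CA$ closed under extensions and the same $\Ext^1$-vanishing supply the required lift) to obtain a special cotorsion preenvelope of $B$, and assemble the resulting $3\times3$ diagram into $0\rt\CE\rt\CK\rt\F''\rt0$ in $\CS$ with $\CK$ all-cotorsion (hence in the right orthogonal) and $\F''$ flat.
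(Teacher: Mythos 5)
Your horseshoe step contains a genuine gap: the group you invoke, $\Ext^1_{\CA}(F_C,D_A)$, is not the obstruction to the construction. To interpolate a column $0\rt F_A\rt F_B\rt F_C\rt 0$ over $0\rt A\rt B\rt C\rt 0$ with the \emph{given} precovers at both ends, you must either lift $F_C\rt C$ through $B\rt C$ (obstruction in $\Ext^1_{\CA}(F_C,A)$, where $A$ is an arbitrary functor, so this group has no reason to vanish), or, equivalently, lift the pullback class $[B\times_C F_C]\in\Ext^1_{\CA}(F_C,A)$ along $(\phi_A)_*:\Ext^1_{\CA}(F_C,F_A)\rt\Ext^1_{\CA}(F_C,A)$; the long exact sequence shows this surjectivity is governed by $\Ext^2_{\CA}(F_C,D_A)$. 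In other words the ``standard horseshoe'' is standard only for \emph{hereditary} complete cotorsion pairs, and $\Ext^1$-orthogonality, which is just the definition of the pair, buys you nothing at this step. The gap is repairable in the present setting: any epimorphism between flats has flat kernel (the sequence ends in a flat, hence is pure, and Proposition \ref{p1} applies), and with enough flats (Proposition \ref{shoh10}) a Yoneda dimension-shift --- replace the middle term of a $2$-extension by a flat via a pullback, so that the splice passes through a flat kernel --- kills $\Ext^2_{\CA}(F,D)$ for $F$ flat, $D$ cotorsion. But none of this is in your write-up, and as written the step fails. Note that the paper's proof is arranged precisely to avoid this issue: it takes a special flat precover only of $G''$, pulls back along $G\rt G''$ to get $P$, takes a special flat precover of $P$, and pulls back once more; the flat term over $G'$ then arises as a pullback $Q$ sitting in $0\rt Q\rt F\rt F''\rt 0$, which is flat by purity and Proposition \ref{p1}, rather than being a prescribed $F_A$. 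No lifting, hence no hereditarity, is needed.

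Your second concern --- that ``special'' requires the all-cotorsion sequence $\DD$ to lie in the right orthogonal of the flat objects of $\CS$ --- is legitimate, but you leave it at ``I would fix a splitting and correct the others by a finite diagram chase,'' which is a sketch, not a proof: degreewise split extensions of $\F'$ by $\DD$ correspond to homotopy classes of chain maps $\F'\rt\DD[1]$, and the null-homotopy has to be constructed inductively using $\Ext^1_{\CA}(\mathrm{flat\ cycles},\mathrm{cotorsion})=0$, where purity of $\F'$ guarantees the cycles are flat. For what it is worth, the paper's own proof of Lemma \ref{107} does not verify this orthogonality either: it only constructs the exact sequence $0\rt\C\rt\F\rt\mathbf{G}\rt0$ with termwise flat and termwise cotorsion entries, which is exactly what Proposition \ref{c107} and Theorem \ref{2111} consume, the orthogonality being deferred to the unproved remark that cotorsion complexes are dg-cotorsion. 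So your extra scruple is a more careful reading of the statement than the paper's, but to count as a proof the chase must be written out; and the preenvelope half, which you dispatch ``by the mirror-image argument,'' inherits both of the above gaps verbatim.
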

\begin{proof}
Let $\xymatrix@C-0.9pc{\mathbf{G}:
0\ar[r]&G'\ar[r]&G\ar[r]&G''\ar[r]&0}$ be an exact sequence in
$\CA$. We find an exact sequence
$\xymatrix@C-0.7pc@R-0.9pc{0\ar[r]&\C\ar[r]&\F\ar[r]&\mathbf{G}\ar[r]&0}$
in $\C(\CA)$ where $\F$ is a short  exact  sequence of flat functors
and $\C$ is a short  exact sequence of cotorsion functors. Let
$$\xymatrix@C-0.7pc@R-0.9pc{0\ar[r]&C''\ar[r]&F''\ar[r]&G''\ar[r]&0}$$
be a special flat precover of $G''$. Consider the following pullback
diagram
\[\xymatrix@C-0.7pc@R-0.9pc{&&0\ar[d]&0\ar[d]\\&&C''\ar[d]\ar@{=}[r]&C''\ar[d]\\0\ar[r]&G'
\ar[r]^j\ar@{=}[d]&P\ar[r]\ar[d]&F''\ar[r]\ar[d]&0\\
0\ar[r]&G'\ar[r]&G\ar[r]\ar[d]&G''\ar[r]\ar[d]&0\\
&&0&0}\] and let
$\xymatrix@C-0.7pc@R-0.9pc{0\ar[r]&C'\ar[r]&F\ar[r]^h&P\ar[r]&0}$ be
a special flat precover  of $P$.  The  pullback of $j$ and $h$
completes the proof.
\end{proof}
\begin{proposition}\label{c107}
Any bounded above acyclic complex in $\CA$ has a special flat
precover and a special cotorsion preenvelope.
\end{proposition}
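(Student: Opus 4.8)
The plan is to lift the functor-level complete cotorsion theory $(\mathrm{Flat}\CA,\mathrm{Cot}\CA)$ of Proposition \ref{p3} to the complex $\mathbf{X}$ degreewise, using the short exact sequences of syzygies and gluing the pieces by the horseshoe lemma, whose single-step case is exactly Lemma \ref{107}. Write $\mathbf{X}=(X^i,\pa^i)$ with $X^i=0$ for $i>n$, put $Z^i=\Ker\pa^i$, and use acyclicity to split $\mathbf{X}$ into the short exact sequences $\CE_i:0\lrt Z^i\lrt X^i\lrt Z^{i+1}\lrt 0$ (with $Z^{n+1}=0$). For each $i$ I choose, by Proposition \ref{p3}, a special flat precover $0\lrt V^i\lrt W^i\lrt Z^i\lrt 0$ with $W^i$ flat and $V^i$ cotorsion (so $W^{n+1}=0$). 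Running the horseshoe lemma on each $\CE_i$ with the fixed covers $W^i,W^{i+1}$ of its end terms produces a special flat precover $0\lrt V^i\oplus V^{i+1}\lrt F^i\lrt X^i\lrt 0$ whose flat term is $F^i=W^i\oplus W^{i+1}$, fitting into a commutative diagram over $\CE_i$ in which $W^i\hookrightarrow F^i$ and $F^i\twoheadrightarrow W^{i+1}$ are the canonical maps. Defining $\pa_{\mathbf{F}}^i:F^i\to F^{i+1}$ as the composite $F^i\twoheadrightarrow W^{i+1}\hookrightarrow F^{i+1}$ assembles these into a complex $\mathbf{F}$ together with a degreewise surjective chain map $\mathbf{F}\lrt\mathbf{X}$; the base case $W^{n+1}=0$ (which is where the bounded-above hypothesis anchors the recursion) starts the construction at the top.

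I would then read off the required properties. By construction $\Ker\pa_{\mathbf{F}}^i=W^i$ and $\im\pa_{\mathbf{F}}^{i-1}=W^i$, so $\mathbf{F}$ is acyclic with flat syzygies $W^i$, and each syzygy sequence $0\lrt W^i\lrt F^i\lrt W^{i+1}\lrt 0$ is split, hence pure; thus $\mathbf{F}\in\CPFA$. Writing $\mathbf{C}=\Ker(\mathbf{F}\lrt\mathbf{X})$, the short exact sequence of complexes $0\lrt\mathbf{C}\lrt\mathbf{F}\lrt\mathbf{X}\lrt 0$ consists of acyclic complexes, so passing to syzygies gives $\Ker\pa_{\mathbf{C}}^i=\Ker(W^i\to Z^i)=V^i$; hence $\mathbf{C}$ is an acyclic complex of cotorsion functors with cotorsion syzygies, i.e. a cotorsion complex, and therefore dg-cotorsion, $\mathbf{C}\in\CPFA^\perp$. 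This exhibits $0\lrt\mathbf{C}\lrt\mathbf{F}\lrt\mathbf{X}\lrt 0$ as a special flat precover of $\mathbf{X}$ in $\C(\CA)$.

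For the special cotorsion preenvelope I would dualize the whole construction: choose special cotorsion preenvelopes $0\lrt Z^i\lrt E^i\lrt P^i\lrt 0$ of the syzygies ($E^i$ cotorsion, $P^i$ flat, $E^{n+1}=0$), apply the horseshoe lemma to each $\CE_i$ to obtain special cotorsion preenvelopes $0\lrt X^i\lrt D^i\lrt P^i\oplus P^{i+1}\lrt 0$ with $D^i=E^i\oplus E^{i+1}$, and assemble a chain map $\mathbf{X}\lrt\mathbf{D}$ with cokernel $\mathbf{F}'$. As above one checks $\Ker\pa_{\mathbf{D}}^i=E^i$, so $\mathbf{D}$ is a cotorsion complex and hence dg-cotorsion, while $\mathbf{F}'$ is pure acyclic with flat syzygies $P^i$, so $\mathbf{F}'\in\CPFA\subseteq{}^\perp(\CPFA^\perp)$; this gives the special cotorsion preenvelope $0\lrt\mathbf{X}\lrt\mathbf{D}\lrt\mathbf{F}'\lrt 0$.

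The main obstacle I anticipate is not the degreewise construction but the bookkeeping that turns it into genuine complex-level data: I must ensure that the horseshoe outputs at consecutive degrees share the same intermediate cover $W^{i+1}$ (respectively $E^{i+1}$), so that the vertical maps really commute with the differentials and $\mathbf{F}\lrt\mathbf{X}$ is an honest chain map; and I must confirm the orthogonality statements $\mathbf{C}\in\CPFA^\perp$ and $\mathbf{F}'\in{}^\perp(\CPFA^\perp)$, that is, that a complex of cotorsion objects with cotorsion syzygies really is dg-cotorsion and that the assembled flat complex is pure acyclic rather than merely acyclic.
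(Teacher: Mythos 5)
Your global strategy is the same as the paper's (decompose $\X$ into the syzygy sequences $\CE_i$, anchor the recursion at the top using boundedness, glue degreewise covers into a complex, then identify $\F$ as pure acyclic with flat syzygies and $\mathbf{C}$ as a cotorsion complex, hence dg-cotorsion), but the key step is wrong: the horseshoe lemma with direct-sum middle term $F^i=W^i\oplus W^{i+1}$ is a privilege of \emph{projective} precovers and fails for flat ones. To define the map $W^i\oplus W^{i+1}\to X^i$ you must lift the already-fixed precover $W^{i+1}\to Z^{i+1}$ along the epimorphism $X^i\twoheadrightarrow Z^{i+1}$, and the obstruction is the class of the pullback extension in $\Ext^1_{\CA}(W^{i+1},Z^i)$; since $W^{i+1}$ is merely flat and $Z^i$ is an arbitrary syzygy (not cotorsion), this group need not vanish. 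Concretely, if $Z^{i+1}$ is itself flat, then $\mathrm{id}:Z^{i+1}\to Z^{i+1}$ is a special flat precover (its kernel $0$ is cotorsion), and your horseshoe step would force the sequence $0\to Z^i\to X^i\to Z^{i+1}\to 0$ to split --- false already over $\Z$, where a pure exact sequence with flat (torsion-free) quotient need not split. One cannot in general repair this by choosing the $W^{i+1}$ more cleverly: in the paper's intended setting $\CG$ may have no nonzero projectives (e.g.\ $\mathfrak{Qco}X$), so liftable flat precovers need not exist, and in any case your recursion fixes each $W^{i+1}$ before it is used at stage $i$ and offers no liftability argument. The dual half has the same gap: extending $Z^i\to E^i$ over the monomorphism $Z^i\hookrightarrow X^i$ needs $\Ext^1_{\CA}(Z^{i+1},E^i)=0$, which would require the syzygies of an arbitrary acyclic complex to be flat.

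You also misread what Lemma \ref{107} provides: it is not the direct-sum horseshoe. The paper's lemma covers the short exact sequence $\CE_i$ by a short exact sequence of flats built from \emph{two successive pullbacks}, so the middle flat term is an \emph{extension} of the chosen end covers (and the kernel an extension of cotorsion objects, again cotorsion since $\mathrm{Cot}\CA$ is closed under extensions), not their direct sum; correspondingly the syzygy sequences $0\to K^{i-1}\to F^{i-1}\to K^i\to 0$ of the resulting $\F$ are only \emph{pure} (they end in flats), not split. If you replace your horseshoe step by Lemma \ref{107}, applied inductively with the cover of $\Ker\pa^i$ produced at the previous stage fed in as the chosen special flat precover of the right-hand term, you recover exactly the paper's proof, and your remaining verifications ($\F\in\CPFA$; $\mathbf{C}$ an acyclic complex of cotorsions with cotorsion syzygies, hence dg-cotorsion and in $\CPFA^{\perp}$) go through unchanged.
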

\begin{proof}
Let  $\Y$ be a bounded above acyclic complex in $\CA$. We construct
an exact sequence
$\xymatrix@C-0.7pc@R-0.9pc{0\ar[r]&\C\ar[r]&\F\ar[r]&\Y\ar[r]&0}$ in
$\C(\CA)$ where $\F$ is a bounded above flat complex and $\C$ is a
bounded above acyclic complex of cotorsions. Without loss of
generality, we can assume that $\Y=(Y^i, \pa_\Y^i)_{i\leq 0}$. By
Lemma \ref{107}, we have the following commutative  diagram
\[\xymatrix@C-1pc@R-1.2pc{&0\ar[d]&0\ar[d]&0\ar[d]&\\0\ar[r]&T^{-1}\ar[r]\ar[d]&C^{-1}\ar[r]\ar[d]&C^0\ar[r]\ar[d]&0\\0\ar[r]&K^{-1}\ar[r]\ar[d]&F^{-1}\ar[r]\ar[d]&F^0\ar[r]\ar[d]&0\\
0\ar[r]&\mathrm{Ker}(\pa^{-1})\ar[r]\ar[d]&Y^{-1}\ar[r]\ar[d]&Y^0\ar[r]\ar[d]&0\\&0&0&0&}\]
in $\CA$ where  the middle row is an exact sequence of flats and the
top row is an exact sequence of cotorsions. For any $i< 0$, we use
Lemma \ref{107} and an inductive procedure to obtain  pure exact
sequences
$\xymatrix@C-1pc{0\ar[r]&K^{i-1}\ar[r]&F^{i-1}\ar[r]&K^i\ar[r]&0}$
of flats and  exact sequences
$\xymatrix@C-1pc{0\ar[r]&T^{i-1}\ar[r]&C^{i-1}\ar[r]&T^i\ar[r]&0}$
of cotorsions. For any $i>0$, set $C^i:=0$, $F^i:=0$. Then,
$\mathbf{F}=(F^i)$ and $\mathbf{C}=(C^i)$ have the desired property.

\end{proof}

\begin{theorem}\label{2111}
Any acyclic complex in $\CA$ has a special flat precover and a
special cotorsion preenvelope.
\end{theorem}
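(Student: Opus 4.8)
The plan is to bootstrap from the bounded-above case (Proposition~\ref{c107}) to an arbitrary acyclic complex $\X=(X^i,\pa_\X^i)_{i\in\Z}$ by writing $\X$ as a direct limit of its left truncations and transporting special flat precovers (resp. special cotorsion preenvelopes) through the colimit. First I would record that $\X=\limt\X_{\leq t}$: for each $t$ there is a canonical chain map $\X_{\leq t}\to\X_{\leq t+1}$ that is the identity in degrees $<t$, the inclusion $\Ker\pa_\X^{t}\hookrightarrow X^{t}$ in degree $t$, and $0\to\Ker\pa_\X^{t+1}$ in degree $t+1$; in each fixed degree the system stabilises, so its colimit is $\X$. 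Moreover the cokernel $Q_t:=\X_{\leq t+1}/\X_{\leq t}$ is the two-term complex $\im\pa_\X^{t}\st{\sim}{\to}\Ker\pa_\X^{t+1}$ concentrated in degrees $t,t+1$ (using acyclicity of $\X$, so $\im\pa_\X^{t}=\Ker\pa_\X^{t+1}$); in particular $Q_t$ is a bounded, contractible acyclic complex, hence falls under Proposition~\ref{c107}.

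Next I would build compatible special flat precovers along the tower. Starting from a special flat precover $0\to\D_0\to\F_0\to\X_{\leq 0}\to0$ and from a special flat precover of each $Q_t$, an application of the Horseshoe Lemma to the short exact sequence $0\to\X_{\leq t}\to\X_{\leq t+1}\to Q_t\to0$ produces inductively special flat precovers $0\to\D_t\to\F_t\to\X_{\leq t}\to0$ in which the transition map $\F_t\to\F_{t+1}$ is the split inclusion $\F_t\hookrightarrow\F_t\oplus\F_{Q_t}=\F_{t+1}$, and the transition $\D_t\to\D_{t+1}$ is a monomorphism whose cokernel is the (dg-cotorsion) cotorsion part of the precover of $Q_t$, using that $\CPFA^{\perp}$ is closed under extensions. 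Since direct limits are exact in the Grothendieck category $\CA$ by Proposition~\ref{shoh1}, hence in $\C(\CA)$, taking $\limt$ yields a short exact sequence $0\to\D\to\F\to\X\to0$ with $\D=\limt\D_t$ and $\F=\limt\F_t$. Here $\F$ is a direct limit (essentially a coproduct, up to the base term) of flat complexes, hence again a flat complex: its kernels are direct limits of flats and so flat by Proposition~\ref{p2}, while its pure acyclicity survives the direct limit by Lemma~\ref{p001}.

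The special cotorsion preenvelope is obtained by the dual construction, pushing the special cotorsion preenvelopes $0\to\X_{\leq t}\to\D'_t\to\F'_t\to0$ of the truncations through the same colimit; the flat part $\limt\F'_t$ stays flat exactly as above, and the cotorsion part $\limt\D'_t$ must again be seen to be dg-cotorsion.

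The main obstacle is precisely this last verification, that $\D=\limt\D_t$ lies in $\CPFA^{\perp}$: right orthogonal classes need not be closed under directed colimits, so it does not come for free. I would handle it via the telescope presentation $0\to\bigoplus_t\D_t\to\bigoplus_t\D_t\to\D\to0$, which by Remark~\ref{Abtino} is even pure, and thereby reduce the required vanishing of $\Ext_{\C(\CA)}^1(\mathbf{G},\D)$ for an arbitrary flat complex $\mathbf{G}\in\CPFA$ to two closure properties of dg-cotorsion complexes: stability under the countable coproducts $\bigoplus_t\D_t$, and the vanishing of the relevant higher $\Ext$ forced by heredity of the pair $(\CPFA,\CPFA^{\perp})$. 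Establishing these two closure facts — not the colimit bookkeeping — is the technical heart of the proof.
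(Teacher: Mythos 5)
Your reduction to Proposition~\ref{c107} via the tower of truncations $\X_{\leq t}$ is natural, but the argument has a genuine gap at exactly the point you flag as its ``technical heart,'' and one of the two facts you defer is false. Right orthogonal classes are closed under products, not coproducts: already for $\CC$ trivial and $\CG$ the category of abelian groups, a countable direct sum $\bigoplus_{n}\widehat{\Z}_p$ of copies of the $p$-adic integers is not cotorsion, and taking stalk complexes (which are dg-cotorsion precisely when their single term is cotorsion, cf.\ Proposition~\ref{cotor1}) shows that $\CPFA^{\perp}$ is not closed under countable coproducts. Hence your telescope $0\to\bigoplus_t\D _t\to\bigoplus_t\D _t\to\D \to0$ reduces the required vanishing of $\Ext^1_{\C(\CA)}(\mathbf{G},\D )$ to a statement with no chance of holding in the paper's generality; heredity cannot repair the $\Ext^1(\mathbf{G},\bigoplus_t\D _t)$ term. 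There is also a secondary problem in the Horseshoe step: to realize $\F_{t+1}=\F_t\oplus\F_{Q_t}$ you must lift $\F_{Q_t}\to Q_t$ along $\X_{\leq t+1}\to Q_t$, and the obstruction lives in $\Ext^1_{\C(\CA)}(\F_{Q_t},\X_{\leq t})$, which does not vanish for an arbitrary special flat precover of $Q_t$; it becomes harmless only if you exploit the contractibility of $Q_t\cong(\im\pa_\X^{t}\st{\sim}{\to}\Ker\pa_\X^{t+1})$ and choose a disk-shaped precover concentrated in degrees $t,t+1$.

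The paper avoids the colimit on the cotorsion side altogether, and that is the essential difference. It first produces a bounded-above special cotorsion preenvelope of $\X^{\leq 0}$ (injective envelope of $\X^{\leq 0}$, Proposition~\ref{c107} applied to the quotient, then a pullback), and then extends to the right \emph{one degree at a time} using Lemma~\ref{107}. Each inductive step modifies the construction in a single new degree, so in every fixed degree the data stabilize after finitely many steps; since ``cotorsion complex'' is a degreewise notion (an acyclic complex of cotorsion objects with cotorsion kernels, and such complexes are dg-cotorsion), no closure of $\CPFA^{\perp}$ under direct limits or coproducts is ever invoked. Your tower can be salvaged along the same lines: with disk-shaped precovers of the contractible quotients $Q_t$, the complexes $\F_{t+1}$ and $\D _{t+1}$ differ from $\F_t$ and $\D _t$ only in degrees $t,t+1$, the system stabilizes degreewise, and $\D =\limt\D _t$ is in each degree (and on each kernel) a finite iterated extension of cotorsion objects, so its membership in $\CPFA^{\perp}$ can be checked degreewise. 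As written, however, the proposal rests on a coproduct-closure property that fails, so the proof is incomplete.
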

\begin{proof}
Let $\X=(X^i, \pa_\X^i)$ be an acyclic  complex in $\CA$. We find an
exact sequence
$\xymatrix@C-0.7pc@R-0.9pc{0\ar[r]&\X\ar[r]&\C\ar[r]&\F\ar[r]&0}$ of
complexes, where $\F$ is flat and $\C$ is cotorsion. Let
$$\xymatrix@C-0.7pc@R-0.9pc{0\ar[r]&\X^{\leq
0}\ar[r]&\I^{\leq 0}\ar[r]^g&\T^{\leq 0}\ar[r]&0}$$ be the injective
envelope of $\X^{\leq 0}$ where $\I^{\leq 0}$ and $\T^{\leq 0}$ are
bounded above acyclic complexes.  By Lemma \ref{c107}, we have a
short exact sequence $\xymatrix@C-0.7pc@R-0.9pc{0\ar[r]&\C'
\ar[r]&\F'\ar[r]^f&\T\ar[r]&0}$ of complexes where $\F'$ is a flat
complex and $\C'$ is  cotorsion. By the pullback of $f$ and $g$, we
deduce  the  exact sequence
$\xymatrix@C-0.7pc@R-0.9pc{0\ar[r]&\X^{\leq 0}
\ar[r]&\C_1\ar[r]^f&\F'\ar[r]&0}$ where $\C_1$ is a bounded above
acyclic complex of cotorsion objects ($\forall i> 0, \C_1^i =0$) and
$\F'$ is a bounded above flat complex ($\forall i> 0, \F'^i =0$). We
use Lemma \ref{107} and an  inductive procedure to construct a
cotorsion complex $\C$ from $\C_1$ and a flat complex $\F$ from
$\F'$ which satisfy  in the following exact sequence
$$\xymatrix@C-0.7pc@R-0.9pc{0\ar[r]&\X\ar[r]&\C\ar[r]&\F\ar[r]&0}$$
of complexes.  Consequently, any acyclic complex in $\CA$ has a
special flat precover and a special cotorsion preenvelope.
\end{proof}

\begin{corollary}\label{c13}
The pair $(\CPFA, \C(\mathrm{dg}\textmd{-}\mathrm{Cot}\CA))$ is a
complete cotorsion theory in $\C(\CA)$.
\end{corollary}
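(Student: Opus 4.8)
The plan is to verify the two orthogonality identities and the completeness separately, reducing everything to Theorem \ref{2111} together with the complete cotorsion theory $(\mathrm{Flat}\CA,\mathrm{Cot}\CA)$ of Proposition \ref{p3}. The identity $\CPFA^{\perp}=\C(\mathrm{dg}\textmd{-}\mathrm{Cot}\CA)$ holds by the very definition of a dg-cotorsion complex, so one half of the cotorsion theory is free. For the reverse identity $\CPFA={}^{\perp}\C(\mathrm{dg}\textmd{-}\mathrm{Cot}\CA)$, I would first record that $\CPFA$ is closed under direct summands: a summand of a pure acyclic complex of flats is again acyclic with flat cycles by Proposition \ref{p2}. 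Granting the existence of special $\CPFA$-precovers, the standard splitting argument (if $\mathbf{M}\in{}^{\perp}\C(\mathrm{dg}\textmd{-}\mathrm{Cot}\CA)$, its special precover $0\lrt\mathbf{Y}\lrt\F\lrt\mathbf{M}\lrt0$ splits, exhibiting $\mathbf{M}$ as a summand of $\F\in\CPFA$) then upgrades the trivial inclusion $\CPFA\subseteq{}^{\perp}\C(\mathrm{dg}\textmd{-}\mathrm{Cot}\CA)$ to an equality. Thus the whole statement reduces to producing, for an \emph{arbitrary} complex $\X$, a special $\CPFA$-precover; the special cotorsion preenvelope then comes for free from Salce's lemma, since $\CA$ is Grothendieck (Proposition \ref{shoh1}), so $\C(\CA)$ has enough injectives, and every injective complex lies in $\CPFA^{\perp}$.

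To build a special $\CPFA$-precover of a general complex $\X=(X^i,\pa_\X^i)$, I would start from a degreewise flat cover. Using Proposition \ref{p3} choose flat covers $\Phi^i\twoheadrightarrow X^i$ and assemble the contractible complex $\F_0=\bigoplus_i D^{i+1}(\Phi^i)$, where $D^{i+1}(\Phi^i)$ is the disk on $\Phi^i$ concentrated in degrees $i,i+1$. Then $\F_0\in\CPFA$ (a coproduct of flat disks is pure acyclic with flat cycles, by Proposition \ref{p2}), and the canonical map $\F_0\lrt\X$ is an epimorphism, yielding a short exact sequence $0\lrt\mathbf{K}_0\lrt\F_0\lrt\X\lrt0$ in $\C(\CA)$. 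When $\X$ is acyclic this kernel is again acyclic and Theorem \ref{2111} finishes the construction directly. The genuine content of the corollary is the non-acyclic case: here the long exact cohomology sequence only gives $\mathrm{H}^{i+1}(\mathbf{K}_0)\cong\mathrm{H}^{i}(\X)$, so $\mathbf{K}_0$ carries the cohomology of $\X$ and is \emph{not} dg-cotorsion on the nose.

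The crux is therefore to correct $\mathbf{K}_0$ into a dg-cotorsion kernel, and I would do this by cogeneration by a set. Combining Proposition \ref{p3} with Proposition \ref{cotor1} one shows that $\CPFA^{\perp}$ is precisely the right-orthogonal of a \emph{set} $\CS\subseteq\CPFA$, assembled from the disks and spheres built on a generating set of flats in $\CA$ together with the bounded pure acyclic flat complexes supplied by Lemma \ref{107} and Proposition \ref{c107}. Since $\C(\CA)$ is Grothendieck and $\CPFA$ is closed under extensions, coproducts and direct summands, the Eklof--Trlifaj small object argument then produces, by a transfinite iteration of the construction above, a short exact sequence $0\lrt\mathbf{Y}\lrt\F\lrt\X\lrt0$ with $\F\in\CPFA$ and $\mathbf{Y}\in\CS^{\perp}=\CPFA^{\perp}$; Proposition \ref{cotor1} confirms that this $\mathbf{Y}$ is degreewise cotorsion, matching the intended right-hand class. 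I expect the main obstacle to be exactly this passage from acyclic to arbitrary complexes: Theorem \ref{2111} only treats the acyclic case, and for general $\X$ the kernel of any flat epimorphism is forced to carry $\mathrm{H}^{*}(\X)$. The technical heart is to verify that a single set of test complexes detects membership in $\CPFA^{\perp}$ — equivalently, that $\CPFA$ is deconstructible — so that completeness is genuinely derived rather than merely asserted.
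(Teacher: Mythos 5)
Your skeleton is sound up to a point: the identity $\CPFA^{\perp}=\C(\mathrm{dg}\textmd{-}\mathrm{Cot}\CA)$ is indeed definitional, the summand-plus-splitting argument for ${}^{\perp}(\CPFA^{\perp})\subseteq\CPFA$ is the standard one, and you correctly isolate the crux, namely that Theorem \ref{2111} only supplies special flat precovers of \emph{acyclic} complexes while the corollary concerns arbitrary ones. But precisely at that crux your argument has a genuine gap: you invoke cogeneration by a set (equivalently, deconstructibility of $\CPFA$) without proving it, and none of the ingredients you cite delivers it. Orthogonality to disks and spheres on a generating set of flats only tests degreewise and cycle-wise conditions, and Lemma \ref{107} and Proposition \ref{c107} produce precovers of short exact sequences and of bounded-above acyclic complexes --- they do not yield a filtration of an arbitrary unbounded pure acyclic complex of flats by a \emph{set} of subcomplexes lying in $\CPFA$, which is exactly what the Eklof--Trlifaj small object argument requires. (Deconstructibility of such classes is true in considerable generality, by work of Gillespie and \v{S}\v{t}ov\'{\i}\v{c}ek, but it is a substantial theorem resting on purity and accessibility arguments in $\CA$, nowhere available in this paper; also, as a small point, the small object argument first produces the preenvelope sequence $0\to\X\to\mathbf{Y}\to\F\to0$, with precovers then extracted via Salce, not the precover directly.) So, by your own closing admission, completeness in your write-up is asserted rather than derived.

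The paper closes this gap with a short homological trick requiring no set-theoretic input, which you may wish to compare. Given an arbitrary complex $\X$, choose a quasi-isomorphism $f:\X[-1]\to\I$ with $\I$ dg-injective; then $\mathrm{Cone}(f)$ is acyclic, so Theorem \ref{2111} applies to it and gives an exact sequence $0\to\C'\to\F\to\mathrm{Cone}(f)\to0$ with $\F\in\CPFA$ and $\C'$ a cotorsion complex. The canonical degreewise split monomorphism $\I\to\mathrm{Cone}(f)$ has cokernel $\X$, and the pullback $\PP$ of $\F\to\mathrm{Cone}(f)$ along it fits into two exact sequences, $0\to\PP\to\F\to\X\to0$ and $0\to\C'\to\PP\to\I\to0$. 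Since $\C'$ is cotorsion, hence dg-cotorsion, and $\I$ is dg-injective, hence in $\CPFA^{\perp}$ because every member of $\CPFA$ is acyclic, the extension $\PP$ is dg-cotorsion, and the first sequence is the desired special $\CPFA$-precover of $\X$; special preenvelopes then follow by Salce's lemma exactly as you indicate. In short, your route would become a complete (and in fact more far-reaching) proof only after establishing deconstructibility of $\CPFA$, whereas the paper's pullback along a dg-injective resolution bootstraps the general case directly from the acyclic one.
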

\begin{proof}
Let $\X$ be a complex in $\CA$. There is a quasi-isomorphism $f:
\X[-1]\lrt\I$ where $\I$ is a dg-injective complex in $\CA$
(\cite{Sp88}). By Theorem \ref{2111}, there is an exact sequence
$\xymatrix@C-0.7pc@R-0.9pc{0\ar[r]&\C'\ar[r]&\F\ar[r]&\mathrm{Cone}(f)\ar[r]&0}$
of complexes where $\F$ is flat and $\C$ is cotorsion. Then, the
pullback of $\xymatrix@C-0.7pc{\F\ar[r]&\mathrm{Cone}(f)}$ and
$\xymatrix@C-0.7pc{\I\ar[r]&\mathrm{Cone}(f)}$ completes  the proof.
\end{proof}

\section{The Pure derived category of of flats}
Let $\K(\CA)$ be the homotopy category of $\CA$ and $\KFA$ be its
full subcategory consisting of all complexes of flat functors. Let
$\KPFA$ be the full subcategory of $\KFA$ consisting of all flat
complexes,  $\mathbb{K}({\rm dg}$-${\rm Cof}\CA)$ be the essential
image of dg-cotorsion complexes of flats in $\CA$. Indeed,
$\mathbb{K}({\rm dg}$-${\rm Cof}\CA)$ is a full triangulated
subcategory of $\KFA$ and it is closed under isomorphisms. In this
section, we prove that $\mathbb{K}({\rm dg}$-${\rm Cof}\CA)$ and the
pure derived category $\DPFA$ of flats in $\CA$ are equivalent as
triangulated categories.

\begin{theorem}\label{c13440}
The inclusion  $\KFA\lrt\KA$ has a right adjoint.
\end{theorem}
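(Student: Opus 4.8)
The plan is to verify the orthogonality criterion recalled in the introduction. Put $\CS=\KFA$ and $\CT=\KA$. Since $\mathrm{Flat}\CA$ is closed under extensions, shifts and direct summands (it is the left half of the complete cotorsion theory of Proposition \ref{p3}), the complexes with flat terms are stable under mapping cones, suspensions and retracts, so $\KFA$ is a thick subcategory of $\KA$. Thus it suffices to produce, for every $\mathbf{X}\in\KA$, a distinguished triangle $\mathbf{X}\lrt\mathbf{C}\lrt\mathbf{S}\lrt\Sigma\mathbf{X}$ with $\mathbf{S}\in\KFA$ and $\mathbf{C}\in\KFA^{\perp}$; then \cite[Lemma 3]{B} and \cite[Proposition 4.9.1]{K10} yield the desired right adjoint (and the equivalence $\KFA^{\perp}\simeq\KA/\KFA$).

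The decisive point is that this triangle cannot be read off directly from the abelian cotorsion theory of Corollary \ref{c13}: the short exact sequences it supplies are not degreewise split, so they do not descend to triangles in $\KA$. I would therefore change the exact structure and regard $\C(\CA)$ as a Frobenius category under the degreewise split exact structure, whose projective-injective objects are the contractible complexes and whose stable category is precisely $\KA$. In this category one has a natural isomorphism $\Ext^{1}(\mathbf{G},\mathbf{C})\cong\Hom_{\KA}(\mathbf{G},\Sigma\mathbf{C})$, and since $\KFA$ is closed under shifts the right orthogonal of $\KFA$ computed in this exact structure coincides with the triangulated orthogonal $\KFA^{\perp}$. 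A direct test against the stalk complexes $S^{n}(F)$ with $F$ flat, moreover, identifies $\KFA^{\perp}$ with the class of acyclic complexes $\mathbf{C}$ for which $\Hom_{\CA}(F,\mathbf{C})$ is acyclic for every flat $F$; this description is visibly homotopy invariant and guides the construction.

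The main work, and the step I expect to be the real obstacle, is to show that $(\KFA,\KFA^{\perp})$ is a complete cotorsion theory in this degreewise split Frobenius category, since one must manufacture the flat approximation through a degreewise split sequence rather than the quasi-isomorphism or non-split resolution that the abelian theory offers. I would build the pair by lifting the complete cotorsion theory $(\mathrm{Flat}\CA,\mathrm{Cot}\CA)$ of Proposition \ref{p3} degreewise. Because $\mathrm{Flat}\CA$ is closed under direct limits (Proposition \ref{p2}) and, in the locally presentable category $\CA$, is cogenerated by a set of flats such as the functors $S_{c}(\CF)$, the class of complexes of flats is deconstructible; an Eklof--Trlifaj type transfinite argument then shows it is special preenveloping in the degreewise split structure. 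Applying this to $\mathbf{X}$ gives a degreewise split short exact sequence $0\lrt\mathbf{X}\lrt\mathbf{C}\lrt\mathbf{S}\lrt0$ with $\mathbf{S}$ a complex of flats and $\mathbf{C}\in\KFA^{\perp}$; being degreewise split it is exactly the triangle required by the criterion, completing the proof.
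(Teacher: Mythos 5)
Your overall skeleton is legitimate: produce for each $\X\in\KA$ a degreewise split short exact sequence $0\lrt\X\lrt\C\lrt\KS\lrt 0$ with $\KS$ a complex of flats and $\C\in\KFA^{\perp}$, read it as a triangle, and invoke \cite[Lemma 3]{B}. This is, in essence, the shape of the argument hidden inside the result the paper actually cites. The paper's own proof is two lines: since $\CA$ is Grothendieck, hence locally presentable (Proposition \ref{shoh1}), and $\mathrm{Flat}\CA$ is closed under direct limits (Proposition \ref{p2}), the class $\mathrm{Flat}\CA$ is closed under $\alpha$-pure subobjects for a suitable regular cardinal $\alpha$, and then \cite[Theorem 5]{K12} immediately gives the right adjoint to $\KFA\lrt\KA$. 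So where you propose to manufacture the approximation by hand, the paper outsources it to Krause.

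The genuine gap in your version is the step you yourself flag as the main work: the claim that the class of complexes of flats is deconstructible, so that an Eklof--Trlifaj argument applies in the degreewise split exact structure. Nothing in the paper supports this. Flatness in $\CA$ is defined by the duality condition that $F^{+}$ be injective in $\CB$; the Setup assumes only that $(\mathrm{Flat}\CG,\mathrm{Cot}\CG)$ is \emph{complete}, not that it is generated by a set, and there is no bound anywhere on the presentability rank of flat functors, nor any argument that every flat functor is a transfinite extension (or even a filtered colimit) of objects from a set such as $\{S_{c}(\CF)\}$ --- those functors are flat, but no filtration result is proved or evident. Note also that the paper's completeness of $(\mathrm{Flat}\CA,\mathrm{Cot}\CA)$ (Proposition \ref{p3}) is obtained from El Bashir's covering theorem \cite{E} together with a Salce/Wakamatsu-type pullback, a route that yields special precovers without producing any filtrations, so deconstructibility cannot be extracted from it either. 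Without a generating set the Eklof--Trlifaj machinery never starts, and your special preenvelope in the split structure is unproven; Krause's theorem exists precisely to circumvent this, replacing set-generation by closure under $\alpha$-pure subobjects and an accessibility argument. Two smaller soft spots: thickness of $\KFA$ in $\KA$ requires splitting homotopy idempotents (via coproducts and homotopy colimits), not merely degreewise summand closure of $\mathrm{Flat}\CA$; and your stalk-complex description of $\KFA^{\perp}$ is only a necessary condition. Substituting the citation \cite[Theorem 5]{K12} for your deconstructibility step repairs the argument and collapses it to the paper's proof.
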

\begin{proof}
Let $\alpha$ be an infinite cardinal. By Proposition \ref{p2},
$\mathrm{Flat}\CA$ is closed under $\alpha$-direct limits and so it
is closed under $\alpha$-pure subobject. So, by \cite[Theorem
5]{K12}, the inclusion $\KFA\lrt\K(\CA)$ has a right adjoint.
\end{proof}
Let $\KPFA$ be the thick subcategory of $\KFA$ consisting of pure
acyclic complexes and $\DPFA=\KFA/\KPFA$ be the pure derived
category of flat functors. In the following, we prove the main
result of this work.
\begin{theorem}\label{c1344}
There is an equivalence   $\mathbb{K}({\rm dg}$-${\rm
Cof}\CA)\lrt\DPFA$ of triangulated categories.
\end{theorem}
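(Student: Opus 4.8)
The plan is to obtain the equivalence as an instance of the Bousfield localization recalled in the orthogonality paragraph, applied to the thick subcategory $\CS=\KPFA$ of $\CT=\KFA$. I will show that the right orthogonal $\KPFA^{\perp}$, computed inside $\KFA$, coincides with $\mathbb{K}({\rm dg}$-${\rm Cof}\CA)$, and that every object of $\KFA$ fits into an approximation triangle having one vertex in $\KPFA^{\perp}$ and the other in $\KPFA$. Granting these two facts, \cite[Lemma 3]{B} shows that the inclusion $\KPFA\lrt\KFA$ has a right adjoint, and \cite[Proposition 4.9.1]{K10} then provides a triangle equivalence $\KPFA^{\perp}\lrt\KFA/\KPFA=\DPFA$; composing with the identification $\KPFA^{\perp}=\mathbb{K}({\rm dg}$-${\rm Cof}\CA)$ yields the theorem.

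\textbf{Construction of the triangle.} Let $\X=(X^i,\pa_\X^i)\in\KFA$, so each $X^i$ is flat. By the completeness of the cotorsion theory $(\CPFA,\C(\mathrm{dg}\textmd{-}\mathrm{Cot}\CA))$ of Corollary \ref{c13}, there is a short exact sequence of complexes
\[\xymatrix@C-0.7pc@R-0.9pc{0\ar[r]&\X\ar[r]&\D\ar[r]&\mathbf{P}\ar[r]&0}\]
with $\D$ dg-cotorsion and $\mathbf{P}\in\CPFA$ a flat complex. The apparent difficulty is that $\D$ need not a priori be a complex of flats; this is resolved by the definition of a flat functor (Definition \ref{safi11}): since each $P^i$ is flat, the degreewise sequence $0\rt X^i\rt D^i\rt P^i\rt0$ ends in a flat object and is therefore pure, whence Proposition \ref{p1} forces $D^i$ to be flat. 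Thus $\D\in\mathbb{K}({\rm dg}$-${\rm Cof}\CA)$, and the sequence yields a distinguished triangle $\xymatrix@C-0.7pc{\X\ar[r]&\D\ar[r]&\mathbf{P}\ar[r]&\Sigma\X}$ in $\KFA$ with $\mathbf{P}\in\KPFA$.

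\textbf{The orthogonality.} To identify $\KPFA^{\perp}$ with $\mathbb{K}({\rm dg}$-${\rm Cof}\CA)$, first let $\mathbf{P}\in\KPFA$ be a pure acyclic complex of flats and $\D$ a dg-cotorsion complex of flats; I must show $\Hom_{\KFA}(\mathbf{P},\D)=0$. Equivalently, the total Hom complex $\Hom^{\bullet}(\mathbf{P},\D)$ is acyclic, since $\Hom_{\KFA}(\mathbf{P},\D)=\mathrm{H}^{0}\Hom^{\bullet}(\mathbf{P},\D)$ and the same holds for every shift of $\mathbf{P}$, which is again pure acyclic of flats. This acyclicity is the standard translation of the defining property $\D\in\CPFA^{\perp}$—the vanishing of $\Ext^1_{\C(\CA)}$ against every flat complex—and is proved by constructing a contracting homotopy degreewise, the obstruction at each stage being an $\Ext^1$ of a flat cycle $\Ker\pa^n_{\mathbf{P}}$ against a cotorsion object (Proposition \ref{cotor1}), which vanishes. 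Hence $\mathbb{K}({\rm dg}$-${\rm Cof}\CA)\subseteq\KPFA^{\perp}$. For the reverse inclusion take $\Y\in\KPFA^{\perp}$ and apply the triangle of the previous paragraph to $\Y$, giving $\xymatrix@C-0.7pc{\Y\ar[r]&\D\ar[r]&\mathbf{P}\ar[r]&\Sigma\Y}$ with $\D\in\mathbb{K}({\rm dg}$-${\rm Cof}\CA)\subseteq\KPFA^{\perp}$ and $\mathbf{P}\in\KPFA$; as $\KPFA^{\perp}$ is a triangulated subcategory, the cofiber $\mathbf{P}$ lies in $\KPFA\cap\KPFA^{\perp}=0$, so $\Y\cong\D\in\mathbb{K}({\rm dg}$-${\rm Cof}\CA)$.

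\textbf{Conclusion.} The triangle of the second paragraph is precisely the input required by the orthogonality paragraph with $\CS=\KPFA$, so $\KPFA\lrt\KFA$ admits a right adjoint and $\KPFA^{\perp}\lrt\DPFA$ is a triangle equivalence; combined with $\KPFA^{\perp}=\mathbb{K}({\rm dg}$-${\rm Cof}\CA)$ this gives the asserted equivalence $\mathbb{K}({\rm dg}$-${\rm Cof}\CA)\lrt\DPFA$. The only genuinely technical step is the null-homotopy computation $\Hom_{\KFA}(\mathbf{P},\D)=0$ of the orthogonality paragraph; the construction of the triangle is formal once Definition \ref{safi11} and Proposition \ref{p1} are used to keep the dg-cotorsion preenvelope inside the complexes of flats, and the remaining identifications are immediate from completeness.
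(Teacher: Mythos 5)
There is a genuine gap in your construction of the approximation triangle, and it is precisely the point where your route diverges from the paper's. From the special preenvelope $0\rt\X\rt\D\rt\mathbf{P}\rt0$ you read off a distinguished triangle $\X\rt\D\rt\mathbf{P}\rt\Sigma\X$ in $\KFA$, but a short exact sequence of complexes induces a triangle in the homotopy category only when it is \emph{degreewise split}. In your sequence the degreewise extensions are $0\rt X^i\rt D^i\rt P^i\rt0$ with $X^i$ and $P^i$ merely flat; splitness would require $\Ext^1_{\CA}(P^i,X^i)=0$, and nothing gives this when the subobject $X^i$ is flat rather than cotorsion. The purity of the degreewise sequences, which you correctly deduce from Definition \ref{safi11} (and which, via Proposition \ref{p1}, does legitimately show each $D^i$ is flat), is strictly weaker than splitness: over $\Z$, a non-split extension of $\Q$ by $\Z$ is pure with flat end terms. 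So the triangle you use twice --- as the Bousfield input and again for the inclusion $\KPFA^{\perp}\subseteq\mathbb{K}({\rm dg}\textmd{-}{\rm Cof}\CA)$ --- is unjustified as stated.

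The paper runs the approximation in the opposite direction exactly to manufacture the degreewise splitting: it takes the special flat \emph{precover} $0\rt\C\rt\F\rt\X\rt0$ with $\F\in\CPFA$ and $\C$ dg-cotorsion. By Proposition \ref{cotor1} the kernel $\C$ is degreewise cotorsion (and it is degreewise flat by the same purity-plus-Proposition \ref{p1} argument you applied to $\D$), while $\X$ is degreewise flat, so $\Ext^1_{\CA}(X^i,C^i)=0$ forces the sequence to be degreewise split. This yields a genuine triangle $\C\rt\F\rt\X\st{u}{\rt}\Sigma\C$ in $\KFA$, whose rotation $\X\rt\Sigma\C\rt\Sigma\F\rt\Sigma\X$ is the required approximation with $\Sigma\C\in\mathbb{K}({\rm dg}\textmd{-}{\rm Cof}\CA)$ and $\Sigma\F\in\KPFA$. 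If you substitute this triangle for yours, the rest of your argument --- the inclusion $\mathbb{K}({\rm dg}\textmd{-}{\rm Cof}\CA)\subseteq\KPFA^{\perp}$, the splitting argument for the converse, and the appeals to \cite[Lemma 3]{B} and \cite[Proposition 4.9.1]{K10} --- goes through and essentially coincides with the paper's proof. A smaller point: your degreewise null-homotopy argument for $\Hom_{\K(\CA)}(\mathbf{P},\D)=0$ has no base case for unbounded complexes; the clean formulation is the isomorphism $\Hom_{\K(\CA)}(\mathbf{P},\D)\cong\Ext^1_{\C(\CA)}(\Sigma\mathbf{P},\D)$, valid because every extension of a degreewise cotorsion complex by a degreewise flat one is degreewise split (Proposition \ref{cotor1} again), and the right-hand side vanishes since $\Sigma\mathbf{P}\in\CPFA$ and $\D$ is dg-cotorsion.
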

\begin{proof}
The proof contains two parts. In part one, we show that
$\KPFA^\perp$ and $\DPFA$ are equivalent. Let $\X$ be a complex of
flats. By Theorem \ref{2111}, there is an exact sequence
\begin{align}\label{connn}
\xymatrix@C-0.7pc@R-0.9pc{0\ar[r]&\C\ar[r]&\F\ar[r]&\X\ar[r]&0}
\end{align}
where $\mathbf{F} \in \CPFA$ and $\mathbf{C}\in \mathbf{C}({\rm
dg}$-${\rm Cof}\CA)$. In addition, by Proposition \ref{cotor1},
$\mathbf{C}$ is a complex of cotorsion objects. Then,  \eqref{connn}
is degree-wise split and so there is a canonical morphism $ u:
\mathbf{X} \rightarrow \Sigma \mathbf{C}$ such that
$\xymatrix@C-.7pc{
\mathbf{C}\ar[r]&\mathbf{F}\ar[r]&\mathbf{X}\ar[r]^u&\Sigma
\mathbf{C}}$ is a distinguished triangle in $\KFA$. But,
$\Ext^1_{\C(\CA)}(\F,\X)=0$ for all flat complex. Therefore,
$\X\in\KPFA^\perp$ and  hence, $\KPFA\lrt\KFA$ has a right adjoint.
This implies  the equivalence $\KPFA^\perp=\mathbb{K}({\rm
dg}$-${\rm Cof}\CA)\lrt\DPFA$ of triangulated categories.

In the second part, we show that $\KPFA^\perp$=$\mathbb{K}({\rm
dg}$-${\rm Cof}\CA)$. Let $\X$ be a complex of flats in $\CA$. If
$\X\in\mathbb{K}({\rm dg}$-${\rm Cof}\CA)$, then $\X$ is isomorphic
in $\KFA$ to a dg-cotorsion complex $\X'$ of flats. But,
$\Ext^i_{\C(\CA)}(\F,\X')=0$ for all flat complex. Then, for any
flat complex $\F\in\KPFA^\perp$,
$\mathrm{Hom}_{\K(\CA)}(\F,\X)=\mathrm{Hom}_{\K(\CA)}(\F,\X')=0$.
Then $\X\in\KPFA^\perp$. Conversely, assume that $\X\in\KPFA^\perp$.
By part one, we a distinguished triangle $\xymatrix@C-.7pc{
\mathbf{X}\ar[r]&\mathbf{C}\ar[r]&\mathbf{F}\ar[r]^u&\Sigma
\mathbf{X}}$ where $\C$ is a dg-cotorsion flat complex and $\F$ is a
flat complex.  Applying $\mathrm{Hom}_{\K(\CA)}(\textmd{-},\C)$ on
the triangle and deduce the desired result.
\end{proof}

In the reminder of this section, we give some  examples of
Grothendieck categories which can be replaced by $\CG$.
\subsection{Category of $R$-modules}
Let $R$ be an associative ring with $1\neq0$ and $\CG$ be the
category of all left $R$-modules. Then $\CA=\mathrm{Fun}(\CC, \CG)$
is a generalization of the category $\C(R)$ of complexes of
$R$-modules.

\subsection{Category of quasi-coherent sheaves of $\CO_X$-modules}

Let $(X,\CO_X)$ be a quasi-compact and semi-separated scheme and
$\mathfrak{Mod}X$ be the category of all sheaves of $\CO_X$-modules
and $\mathfrak{Qco}X$ be the category of all quasi-coherent sheaves
of $\CO_X$-modules. We know that $\mathfrak{Qco}X$ is the full
subcategory of $\mathfrak{Mod}X$ and the inclusion functor
$i:\mathfrak{Qco}X\to\mathfrak{Mod}X$ has a right adjoint
$Q:\mathfrak{Mod}X\to\mathfrak{Qco}X$ (\cite{TT}). So, $Q$ preserves
injective cogenerators and by \cite[pp. 1109]{MS11}
$\mathfrak{Qco}X$ admits arbitrary products. Also, for each pair
$\CF$ and $\CF'$ of quasi-coherent sheaves of $\CO_X$-modules,
${\mathcal{H}}om_{\mathrm{q}}(\CF,\CF')=Q{\mathcal{H}}om(\CF,\CF')$
is an internal Hom structure on $\mathfrak{Qco}X$. If $\CJ$ is an
injective cogenerator in $\mathfrak{Qco}X$ and
$(\textmd{-})^+={\mathcal{H}}om_{\mathrm{q}}(\textmd{-},\CJ)$ then,
by \cite{Ho17a}, an exact sequence $\xymatrix@C-0.7pc{\mathcal{L}:
0\ar[r]&\CF'\ar[r]^f&\CF\ar[r]^g&\CF''\ar[r]&0}$ in
$\mathfrak{Qco}X$ is pure if and only if $\mathcal{L}^+$ splits.
Therefore, a quasi-coherent $\CO_X$-module $\CF$  is flat if and
only if $\CF^+$ is injective. Consequently,  flat quasi-coherent
$\CO_X$-modules are flat in the sense of Definition \ref{safi11}. By
\cite[\S 3]{Ho17a},  $\mathfrak{Qco}X$ does  not have enough
projective objects. But, any quasi-coherent sheaves of
$\CO_X$-module is a quotient of a flat quasi-coherent sheaf (see
\cite[Corollary 3.21]{Mu07}).

\end{document}